\providecommand{\U}[1]{\protect\rule{.1in}{.1in}}
\newtheorem{theorem}{Theorem}
\theoremstyle{plain}
\newtheorem{corollary}{Corollary}
\newtheorem{lemma}{Lemma}
\newtheorem{remark}{Remark}
\numberwithin{equation}{section}
\begin{document}
\title[ ]{Korovkin type theorems for weakly nonlinear and monotone operators}
\author{Sorin G. Gal}
\address{Department of Mathematics and Computer Science\\
University of Oradea\\
University\ Street No. 1, Oradea, 410087, Romania\\
and Academy of Romanian Scientists, Splaiul Independentei nr. 54, 050094,
Bucharest, Romania}
\email{galso@uoradea.ro, galsorin23@gmail.com}
\author{Constantin P. Niculescu}
\address{Department of Mathematics, University of Craiova\\
Craiova 200585, Romania and The Institute of Mathematics of the Romanian
Academy, Bucharest, Romania}
\email{constantin.p.niculescu@gmail.com}
\date{May 3, 2022}
\subjclass[2000]{41A35, 41A36, 41A63}
\keywords{Korovkin type theorems, monotone operator, sublinear operator, convergence
almost everywhere, convergence in measure, convergence in $L^{p}$-norm,
Choquet's integral,.}

\begin{abstract}
In this paper we prove analogues of Korovkin's theorem in the context of
weakly nonlinear and monotone operators acting on Banach lattices of functions
of several variables. Our results concern the convergence almost everywhere,
the convergence in measure and the convergence in $L^{p}$-norm. Several
results illustrating the theory are also included.

\end{abstract}
\maketitle

\section{Introduction}

Korovkin's theorem \cite{Ko1953}, \cite{Ko1960} provides a very simple test of
convergence to the identity for any sequence $(T_{n})_{n}$ of positive linear
operators that map $C\left(  [0,1]\right)  $ into itself: the occurrence of
this convergence for the functions $1,~x$ and $x^{2}$. In other words, the
fact that%
\[
\lim_{n\rightarrow\infty}T_{n}(f)=f\text{\quad uniformly on }[0,1]
\]
for every $f\in C\left(  [0,1]\right)  $ reduces to the status of the three
aforementioned functions. Due to its simplicity and usefulness, this result
has attracted a great deal of attention leading to numerous generalizations.
Part of them are included in the authoritative monograph of Altomare and
Campiti \cite{AC1994}. and the excellent survey of Altomare \cite{Alt2010}.
For some very recent contributions to this topics see \cite{Alt2021},
\cite{Alt2021b} and \cite{Popa}.

Recently, the present authors have extended the Korovkin theorem to the
framework of sublinear and monotone operators acting on function spaces
endowed with the topology of uniform convergence on compact sets. See
\cite{Gal-Nic-Med}, \cite{Gal-Nic-Aeq} and \cite{Gal-Nic-RACSAM}.

The aim of the present paper is to prove that similar results hold in the
context of the three usual modes of convergence used in measure theory:
convergence almost everywhere, convergence in probability and the convergence
in $L^{p}$ norm (also known as the convergence in $p$-mean).

The necessary background on our nonlinear framework is summarized in Section
1. We deal with the class of sublinear and monotone operators acting on Banach
lattices of functions which verify the property of translatability relative to
the multiples of unity. This was introduced in \cite{Gal-Nic-RACSAM},
motivated by our interest in Choquet's theory of integration, but there many
other examples outside that theory, mentioned in this paper. The continuity of
sublinear operators can be characterized via the notion of norm as in the
linear case. As far as we know, the result of Theorem \ref{thmKrein} asserting
that every sublinear and monotone is Lipschitz continuous is new. It extends a
classical result due M.G. Krein \cite{Krein} who considered the case of linear functionals.

The extension of the Korovkin theorem for the aforementioned modes of
convergence makes the object of Theorem 1 (Section 3) and respectively of
Theorem 2 and Theorem 3 (Section 4). As a consequence of Theorem 1 we provide
in Section 5 an alternative proof of the Lebesgue differentiation theorem. The
pointwise convergence of the Bernstein-Kantorovich sequences and the
Sz\'{a}sz-Mirakjan-Kantorovich sequences (as well as of their Choquet
counterparts obtained by replacing the Lebesgue integral by Choquet integral
with respect to a submodular capacity) is presented in Section 6. Our results
in this respect extend a classical theorem of Lorentz \cite{Lor} (Theorem
2.1.1, p. 30).

The paper ends with a section of further results and comments.

\section{Preliminaries on nonlinear operators}

In what follows we denote by $X$ a metric measure space, that is, a triple
$(X,d,\mu)$ consisting of a space $X$ endowed with the metric $d$ and the
measure $\mu,$ defined on the sigma algebra $\mathcal{B}(X)$ of Borel subsets
of $X$. Notice that every set can be turned into a metric measure space by
considering on it the discrete metric and any finite combination (with
positive coefficients) of Dirac measures.

Attached to $X$ is the vector lattice $\mathcal{F}(X)$ of all real-valued
functions defined on $X$, endowed with the pointwise ordering. Among the
vector sublattices of $\mathcal{F}(X)$ which play a role in the extension of
Korovkin's results we mention here
\begin{align*}
\mathcal{F}_{b}(X)  &  =\left\{  f\in\mathcal{F}(X):\text{ }f\text{
bounded}\right\} \\
C(X)  &  =\left\{  f\in\mathcal{F}(X):\text{ }f\text{ continuous}\right\}  ,\\
C_{b}(X)  &  =\left\{  f\in\mathcal{F}(X):\text{ }f\text{ continuous and
bounded}\right\}  ,\\
\mathcal{U}C_{b}(X)  &  =\left\{  f\in\mathcal{F}(X):\text{ }f\text{ uniformly
continuous and bounded}\right\}  ,
\end{align*}
and
\[
C\,_{c}(X)=\left\{  f\in\mathcal{F}(X):\text{ }f\text{ continuous and having a
compact support}\right\}  ,
\]
to which one should add the usual sublattices of measurable functions,%
\begin{align*}
\mathcal{M}(X)  &  =\left\{  f\in\mathcal{F}(X):\text{ }f\text{ Borel
measurable}\right\}  \text{,}\\
\mathcal{AC}_{b}(X)  &  =\left\{  f\in\mathcal{F}(X):\text{ }f\text{ bounded
and almost everywhere continuous}\right\}
\end{align*}
as well as all Banach function spaces (including the spaces $L^{p}(\mu)$ with
$p\in\lbrack1,\infty]$). A thorough presentation of Banach function spaces can
be found in the book by Bennett and Sharpley \cite{BS}.

Notice that $C(X),\mathcal{AC}_{b}(X)$ and $L^{p}(\mu)$ are sublattices of
$\mathcal{M}(X).$ The spaces $C(X),C_{b}(X)~$and $\mathcal{U}C_{b}(X)$
coincide when $X$ is a compact metric space. $\mathcal{A}C_{b}(X)$ coincides
with the space of Riemann integrable functions when $X$ is a compact
$N$-dimensional interval of $\mathbb{R}^{N}$ and $\mu$ is the Lebesgue
measure. Notice also that the spaces $C_{b}(X)$, $\mathcal{U}C_{b}(X)$ and
$L^{p}(\mu)$ are Banach lattices with respect to appropriate norms, precisely,
the sup norm
\[
\left\Vert f\right\Vert _{\infty}=\sup\left\{  \left\vert f(x)\right\vert
:x\in X\right\}  ,
\]
in the case of the first two spaces and the $L^{p}$-norm,%
\[
\left\Vert f\right\Vert _{p}=\left\{
\begin{array}
[c]{cl}%
\left(  \int_{X}\left\vert f(x)\right\vert ^{p}\mathrm{d}\mu(x)\right)  ^{1/p}
& \text{if }p\in\lbrack1,\infty)\\
\operatorname*{esssup}\limits_{x\in X}\left\vert f(x)\right\vert  & \text{if
}p=\infty
\end{array}
\right.
\]
in the case of spaces $L^{p}(\mu).$

An important family of Lipschitz continuous functions in $C(X)$ is that
associated to the metric $d$ by the formulas
\[
d_{x}:X\rightarrow\mathbb{R},\text{\quad}d_{x}(y)=d(x,y)\quad(x\in X).
\]

See \cite{CN2014} and \cite{MN} for the necessary background on Banach
lattices used in this paper.

As is well known, all norms on the $N$-dimensional real vector space
$\mathbb{R}^{N}$ are equivalent. When endowed with the sup norm and the
coordinate wise ordering, $\mathbb{R}^{N}$ can be identified (algebraically,
isometrically and in order) with the Banach lattice $C\left(  \left\{
1,...,N\right\}  \right)  $, where $\left\{  1,...,N\right\}  $ carries the
discrete topology.

Suppose that $X$ and $Y$ are two metric spaces and $E$ and $F$ are
respectively ordered vector subspaces (or subcones of the positive cones) of
$\mathcal{F}(X)$ and $\mathcal{F}(Y)$ that contain the unity. An operator
$T:E\rightarrow F$ is said to be a \emph{weakly nonlinear} if it satisfies the
following two conditions:

\begin{enumerate}
\item[(SL)] (\emph{Sublinearity}) $T$ is subadditive and positively
homogeneous, that is,%
\[
T(f+g)\leq T(f)+T(g)\quad\text{and}\quad T(\alpha f)=\alpha T(f)
\]
for all $f,g$ in $E$ and $\alpha\geq0;$

\item[(TR)] (\emph{Translatability}) $T(f+\alpha\cdot1)=T(f)+\alpha T(1)$ for
all functions $f\in E$ and all numbers $\alpha\geq0.$
\end{enumerate}

In the case when $T$ is \emph{unital} (that is, $T(1)=1)$ the condition of
translatability takes the form%
\begin{equation}
T(f+\alpha\cdot1)=T(f)+\alpha1, \label{TR+!}%
\end{equation}
for all $f\in E$ and $\alpha\geq0.$

In this paper we are especially interested in those weakly nonlinear operators
which preserve the ordering, that is, which verify the following condition:

\begin{enumerate}
\item[(M)] (\emph{Monotonicity}) $f\leq g$ in $E$ implies $T(f)\leq T(g)$ for
all $f,g$ in $E.$
\end{enumerate}

Some authors prefer the term of \emph{isotonicity} for monotonicity in order
to avoid any confusion with the monotonicity of subdifferentials in convex
analysis. However in our paper we don't touch the problem of differentiability.

Crandall and Tartar \cite{CT} have noticed that if an operator $T:L^{\infty
}(\mu)\rightarrow L^{\infty}(\mu)$ is translatable and unital, then it is
monotone if, and only if, it is Lipschitz with Lipschitz constant at most 1.
In particular, this remark holds for $\mathbb{R}^{N}$ endowed with the
sup-norm and any operator $T:\mathbb{R}^{N}\rightarrow\mathbb{R}^{N}$ which
verifies the property (\ref{TR+!}).

Real analysis and harmonic analysis offer numerous example of weakly nonlinear
and monotone operators. So is the operator $T$ acting from the Lebesgue space
$L^{p}(\mathbb{R}^{N})$\ $(p>1)$ into itself via the formula%
\begin{equation}
\left(  Tf\right)  (x)=\sup_{r>0}\frac{1}{\operatorname*{vol}(B_{r}(x))}%
\int_{B_{r}(x)}f(y)\mathrm{d}y.
\end{equation}

The same holds for the operator $S:L^{1}(\mu)\rightarrow L^{1}(\mu),$
associated to a Borel probability measure $\mu$ on $(0,1),$ and defined by the
formula%
\[
(Sf)(t)=\sup_{\mu(A)\leq t}\frac{1}{\mu(A)}\int_{A}f(s)\mathrm{d}\mu.
\]
Notice that $S$ is also unital.

Many more examples can be found in our paper \cite{Gal-Nic-RACSAM}.

Meantime, it is important to notice the existence of sublinear operators which
are neither monotone nor translatable, and example being the Hardy--Littlewood
maximal operator $M:L^{p}(\mathbb{R}^{N})\rightarrow L^{p}(\mathbb{R}^{N}%
)$\ $(p>1),$ defined by the formula%
\[
Mf(x)=\sup_{r>0}\frac{1}{\operatorname*{vol}(B_{r}(x))}\int_{B_{r}%
(x)}\left\vert f(y)\right\vert \mathrm{d}y.
\]
However, $M$ is monotone and translatable on the positive cone of
$L^{p}(\mathbb{R}^{N}).$

A stronger condition than translatability is that of \emph{comonotonic
additivity},

\begin{enumerate}
\item[(CA)] $T(f+g)=T(f)+T(g)$ whenever the functions $f,g\in E$ are
comonotone in the sense that%
\[
(f(s)-f(t))\cdot(g(s)-g(t))\geq0\text{\quad for all }s,t\in X.
\]

\end{enumerate}

The condition of comonotonic additivity occurs naturally in the context of
Choquet's integral (and thus in the case of Choquet type operators, which are
sublinear, comonotonic additive and monotone). See \cite{Gal-Nic-Aeq} and
\cite{Gal-Nic-JMAA} as well as the references therein.

Suppose that $E$ and $F$ are two Banach lattices and $T$ $:E\rightarrow F$ is
an operator (not necessarily linear or continuous).

If $T$ is positively homogeneous operator, then
\[
T(0)=0.
\]
As a consequence, every positively homogeneous and monotone operator $T$ maps
positive elements into positive elements,%
\begin{equation}
Tx\geq0\text{\quad for all }x\geq0. \label{pos-op}%
\end{equation}
Consequently, for linear operators the property (\ref{pos-op}) is equivalent
to monotonicity.

Every sublinear operator is convex and a convex operator is sublinear if and
only if it is positively homogeneous.

The notion of norm can be introduced for every continuous sublinear operator
$T:E\rightarrow F$ via the formulas%
\begin{align*}
\left\Vert T\right\Vert  &  =\inf\left\{  \lambda>0:\left\Vert T\left(
f\right)  \right\Vert \leq\lambda\left\Vert f\right\Vert \text{ for all }f\in
E\right\} \\
&  =\sup\left\{  \left\Vert T(f)\right\Vert :f\in E,\text{ }\left\Vert
f\right\Vert \leq1\right\}  .
\end{align*}
A sublinear operator may be discontinuous, but when it is continuous, it is
Lipschitz continuous:

\begin{lemma}
\label{lemcont}If $T:E\rightarrow F$ is a continuous sublinear operator, then
\[
\left\Vert T\left(  f\right)  -T(g)\right\Vert \leq2\left\Vert T\right\Vert
\left\Vert f-g\right\Vert \text{\quad for all }f\in E.
\]

\end{lemma}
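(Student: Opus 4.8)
The plan is to derive the Lipschitz estimate directly from the two defining properties of a sublinear operator—subadditivity and positive homogeneity—combined with the definition of $\left\Vert T\right\Vert$ as the smallest constant $\lambda$ with $\left\Vert T(h)\right\Vert \leq \lambda \left\Vert h\right\Vert$ for all $h$. The starting observation is that subadditivity gives $T(f) = T\bigl((f-g)+g\bigr) \leq T(f-g) + T(g)$, hence $T(f) - T(g) \leq T(f-g)$ in the order of $F$. By symmetry, $T(g) - T(f) \leq T(g-f)$. The difficulty in a general Banach lattice is that an order inequality $u \leq v$ does not by itself control $\left\Vert u\right\Vert$ unless one also has a lower bound; so I would pass to absolute values.

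The key step is to bound $\bigl\vert T(f) - T(g)\bigr\vert$ pointwise (or latticially) by something whose norm is easy to estimate. From $T(f)-T(g) \leq T(f-g)$ and $-(T(f)-T(g)) = T(g)-T(f) \leq T(g-f)$, I obtain
\[
\bigl\vert T(f) - T(g)\bigr\vert \leq T(f-g) \vee T(g-f) \leq \bigl\vert T(f-g)\bigr\vert \vee \bigl\vert T(g-f)\bigr\vert,
\]
using that in a vector lattice $u \vee (-u) = \vert u\vert$ together with the fact that $a\leq c$ and $-a\leq d$ force $\vert a\vert \leq c\vee d$ when $c,d\geq 0$ (here one should first note $T(f-g)$ and $T(g-f)$ need not be positive, so I would instead majorize $T(f-g)\leq \vert T(f-g)\vert$ and likewise for the other term, which is harmless). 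Taking norms and using the lattice property $\left\Vert u\vee v\right\Vert \leq \max\{\left\Vert u\right\Vert,\left\Vert v\right\Vert\}$ for $u,v\geq 0$ — or more crudely $\left\Vert u \vee v\right\Vert \leq \left\Vert u\right\Vert + \left\Vert v\right\Vert$ — yields
\[
\left\Vert T(f)-T(g)\right\Vert \leq \left\Vert T(f-g)\right\Vert + \left\Vert T(g-f)\right\Vert \leq \left\Vert T\right\Vert\,\left\Vert f-g\right\Vert + \left\Vert T\right\Vert\,\left\Vert g-f\right\Vert = 2\left\Vert T\right\Vert\,\left\Vert f-g\right\Vert,
\]
since $\left\Vert g-f\right\Vert = \left\Vert f-g\right\Vert$. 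This is exactly the claimed inequality.

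The main obstacle I anticipate is purely a matter of being careful with the lattice manipulations: specifically, justifying the passage from the two one-sided order inequalities to a bound on the absolute value of the difference, since $T(f-g)$ is not assumed positive and the naive estimate $\vert a \vert \leq \max\{b, c\}$ requires $b, c \geq 0$. The clean fix is to work with $\vert T(f-g)\vert$ throughout: from $\pm\bigl(T(f)-T(g)\bigr) \leq \vert T(f-g)\vert \vee \vert T(g-f)\vert =: w \geq 0$ one legitimately concludes $\bigl\vert T(f)-T(g)\bigr\vert \leq w$, and then monotonicity of the lattice norm plus the triangle-type estimate $\left\Vert w\right\Vert \leq \left\Vert T(f-g)\right\Vert + \left\Vert T(g-f)\right\Vert$ finishes the argument. (One could also simply absorb the factor differently, but $2\left\Vert T\right\Vert$ is the natural constant coming from summing the two contributions.) No appeal to continuity of $T$ beyond the finiteness of $\left\Vert T\right\Vert$ is needed, and the statement "for all $f\in E$" should of course read "for all $f,g\in E$."
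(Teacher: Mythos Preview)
Your proof is correct and follows essentially the same route as the paper's: both start from subadditivity to get $T(f)-T(g)\leq T(f-g)$ and its symmetric counterpart, pass to absolute values by majorizing $T(f-g)\leq\lvert T(f-g)\rvert$ (the paper writes $\lvert T(f)-T(g)\rvert \leq \lvert T(f-g)\rvert + \lvert T(g-f)\rvert$ directly, where you first go through the supremum), and then take norms to obtain the factor $2\left\Vert T\right\Vert$. The only cosmetic difference is that the paper bounds by the sum immediately rather than via $\vee$, which avoids the extra discussion you include about when $\lvert a\rvert\leq c\vee d$ is legitimate.
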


\begin{proof}
Indeed, for all $f,g\in E$ we have $T(f)\leq T(g)+T(f-g)\leq T(g)+\left\vert
T(f-g)\right\vert $ and $T(g)\leq T(f)+\left\vert T(g-f)\right\vert ,$ whence
\[
\left\vert T(f)-T(g)\right\vert =\sup\left\{  -\left(  T(f)-T(g\right)
),T(f)-T(g)\right\}  \leq\left\vert T(f-g)\right\vert +\left\vert
T(g-f)\right\vert .
\]
Therefore $\left\Vert T(f)-T(g)\right\Vert \leq\left\Vert T(f-g)\right\Vert
+\left\Vert T(g-f)\right\Vert \leq2\left\Vert T\right\Vert \left\Vert
f-g\right\Vert $ and the proof is done.
\end{proof}

It is a remarkable fact that all sublinear and monotone operators are
automatically continuous. This was first proved by M. G. Krein for positive
linear functionals \cite{Krein} and later generalized in several linear
contexts by various authors (including Klee, Lozanovsky, Namioka and
Schaefer). See \cite{AA2001}.

\begin{theorem}
\label{thmKrein}Every sublinear and monotone operator $T$ $:E\rightarrow F$ is
Lipschitz continuous and the Lipschitz constant of $T$ equals $\left\Vert
T\right\Vert ,$ that is,
\[
\left\Vert T(f)-T(g)\right\Vert \leq\left\Vert T\right\Vert \left\Vert
f-g\right\Vert \text{\quad for all }f,g\in E.
\]

\end{theorem}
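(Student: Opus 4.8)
The plan is to first establish that $T$ is bounded (i.e. $\left\Vert T\right\Vert <\infty$), which is the genuine content of the Krein-type phenomenon, and then to sharpen the two–term estimate underlying Lemma \ref{lemcont} to a one–term estimate by exploiting monotonicity, thereby getting the optimal Lipschitz constant.

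\textbf{Step 1 (automatic boundedness).} I would first show that $c:=\sup\{\left\Vert Tx\right\Vert :x\geq 0,\ \left\Vert x\right\Vert \leq 1\}$ is finite, via a gliding–hump argument. If $c=\infty$, then for each $n$ pick $w_{n}\geq 0$ with $\left\Vert w_{n}\right\Vert \leq 1$ and $\left\Vert Tw_{n}\right\Vert \geq n2^{n}$, and set $x_{n}=2^{-n}w_{n}$; positive homogeneity gives $\left\Vert x_{n}\right\Vert \leq 2^{-n}$ and $\left\Vert Tx_{n}\right\Vert \geq n$. Since $E$ is complete the series $x=\sum_{n\geq 1}x_{n}$ converges, and since the positive cone of $E$ is norm–closed we get $x-x_{n}=\sum_{k\neq n}x_{k}\geq 0$, i.e. $0\leq x_{n}\leq x$ for every $n$. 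Because $T$ is positively homogeneous and monotone it maps positive elements to positive elements (property (\ref{pos-op})), so $0\leq Tx_{n}\leq Tx$; the monotonicity of the lattice norm of $F$ then yields $n\leq\left\Vert Tx_{n}\right\Vert \leq\left\Vert Tx\right\Vert $ for all $n$, which is absurd. Hence $c<\infty$. To move from the positive cone to all of $E$, note that from $T(0)=0$ and subadditivity we have $0=T(0)\leq T(f)+T(-f)$, while $f\leq |f|$ and $-f\leq |f|$ combined with monotonicity give $T(f)\leq T(|f|)$ and $-T(f)\leq T(-f)\leq T(|f|)$; hence $|T(f)|\leq T(|f|)$ in $F$, so
\[
\left\Vert T(f)\right\Vert =\left\Vert \,|T(f)|\,\right\Vert \leq\left\Vert T(|f|)\right\Vert \leq c\left\Vert \,|f|\,\right\Vert =c\left\Vert f\right\Vert .
\]
Thus $\left\Vert T\right\Vert \leq c<\infty$ and $T$ is continuous.

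\textbf{Step 2 (optimal Lipschitz constant).} Fix $f,g\in E$ and put $h=f-g$. Subadditivity gives $T(f)=T(g+h)\leq T(g)+T(h)$ and $T(g)=T(f-h)\leq T(f)+T(-h)$, that is, $T(f)-T(g)\leq T(h)$ and $-(T(f)-T(g))\leq T(-h)$. Since $h\leq |h|$ and $-h\leq |h|$, monotonicity yields $T(h)\leq T(|h|)$ and $T(-h)\leq T(|h|)$, whence
\[
|T(f)-T(g)|=(T(f)-T(g))\vee(T(g)-T(f))\leq T(|f-g|)\quad\text{in }F.
\]
(This one–term bound is the place where monotonicity improves on the two–term bound of Lemma \ref{lemcont}.) Taking norms and using the monotonicity of the norm in $F$ together with Step 1,
\[
\left\Vert T(f)-T(g)\right\Vert =\left\Vert \,|T(f)-T(g)|\,\right\Vert \leq\left\Vert T(|f-g|)\right\Vert \leq\left\Vert T\right\Vert \left\Vert \,|f-g|\,\right\Vert =\left\Vert T\right\Vert \left\Vert f-g\right\Vert .
\]
Finally, setting $g=0$ shows that any Lipschitz constant $L$ of $T$ satisfies $\left\Vert T(f)\right\Vert \leq L\left\Vert f\right\Vert $ for all $f$, hence $\left\Vert T\right\Vert \leq L$; together with the displayed inequality this identifies $\left\Vert T\right\Vert$ as the least Lipschitz constant of $T$.

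\textbf{Expected main obstacle.} The only delicate point is Step 1, the automatic continuity: it relies on the completeness of $E$, the closedness of its positive cone, and the monotonicity of the lattice norm of $F$, and the passage from the positive cone to the whole space through $|T(f)|\leq T(|f|)$ is the nonlinear replacement for the linearity step in Krein's classical argument. Once boundedness is secured, Step 2 is a short order–theoretic computation.
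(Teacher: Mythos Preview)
Your proof is correct and follows essentially the same route as the paper: derive the order inequality $|T(f)-T(g)|\leq T(|f-g|)$ from subadditivity plus monotonicity, prove automatic boundedness via a gliding-hump series argument on the positive cone (reducing to positive elements through $|T(f)|\leq T(|f|)$), and then combine the two to get the Lipschitz estimate with constant $\left\Vert T\right\Vert$. The only cosmetic differences are the ordering of the steps and your explicit verification that $\left\Vert T\right\Vert$ is the \emph{least} Lipschitz constant.
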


\begin{proof}
Notice first that for all $f,g\in E$ we have $f\leq g+\left\vert
f-g\right\vert ,$ whence $T(f)-T(g)\leq T\left(  \left\vert f-g\right\vert
\right)  $ due to the monotonicity and subadditivity of $T.$ Interchanging the
role of $f$ and $g$ we infer that $-\left(  T(f)-T(g)\right)  \leq T\left(
\left\vert f-g\right\vert \right)  .$ Therefore
\begin{equation}
\left\vert T(f)-T(g)\right\vert =\sup\left\{  T(f)-T(g),-\left(
T(f)-T(g)\right)  \right\}  \leq T\left(  \left\vert f-g\right\vert \right)  .
\label{ord-Lip}%
\end{equation}
Now the continuity of $T$ can be established by reductio ad absurdum. Indeed,
if $T$ were not continuous, then would exist a sequence $(x_{n})_{n}$ of
elements of $E$ such that $\left\Vert f_{n}\right\Vert \leq1/\left(
n2^{n}\right)  $ and $\left\Vert T(f_{n})\right\Vert \geq n.$ Taking into
account the inequality $\left\vert T(f_{n})\right\vert \leq T\left(
\left\vert f_{n}\right\vert \right)  $, and replacing each $x_{n}$ by
$\left\vert f_{n}\right\vert $ if necessary, one may restrict ourselves to the
case where all elements $f_{n}$ belong to $E_{+}.$ Then the series $\sum
f_{n}$ is absolutely converging, with sum $f\geq0.$ Since $T$ is monotone,%
\[
T(f)\geq T(f_{n})
\]
which implies $\left\Vert T(f)\right\Vert \geq\left\Vert T(f_{n})\right\Vert
\geq n$ for all positive integers $n,$ a contradiction.

Once the continuity of $T$ was established, we infer from (\ref{ord-Lip})
that
\[
\left\Vert T(f)-T(g)\right\Vert \leq\left\Vert T\left(  \left\vert
f-g\right\vert \right)  \right\Vert \leq\left\Vert T\right\Vert \left\Vert
f-g\right\Vert ,
\]
which ends the proof.
\end{proof}

\begin{remark}
\label{rem-orderLip}The proof of Theorem \ref{thmKrein}, shows that every
sublinear and monotone operator $T$ $:E\rightarrow F$ verifies the condition
$\left\vert T(f)-T(g)\right\vert \leq T\left(  \left\vert f-g\right\vert
\right)  $ for all $f,g$ in $E.$
\end{remark}

\section{The case of almost everywhere convergence}

We start with an extension of Korovkin's theorem in the context of sublinear
and monotone operators acting on the vector lattice $\mathcal{AC}_{b}(X)$, of
all bounded and almost everywhere continuous $f:X\rightarrow\mathbb{R}.$

\begin{theorem}
\label{thm_a.e.Conv}Suppose that $X$ is a locally compact subset of the
Euclidean space $\mathbb{R}^{N}$ endowed with a positive Borel measure $\mu$
and $E$ is a vector sublattice of $\mathcal{F}(X)$ that contains the following
set of test functions: $1,~\pm\operatorname*{pr}_{1},...,~\pm
\operatorname*{pr}_{N}$ and $\sum_{k=1}^{N}\operatorname*{pr}_{k}^{2}$.

$(i)$ If $(T_{n})_{n}$ is a sequence of sublinear and monotone operators from
$E$ into $E$ such that
\[
T_{n}(f)(x)\rightarrow f(x)\text{\quad a.e.}%
\]
for each of the $2N+2$ aforementioned test functions, then this property
extends to all nonnegative functions $f$ in $E\cap\mathcal{AC}_{b}(X)$.

$(ii)$ If, in addition, each operator $T_{n}$ is translatable, then
$T_{n}(f)(x)\rightarrow f(x)$ a.e. for every $f\in E\cap\mathcal{AC}%
_{b}\left(  X\right)  $.

Moreover, in both cases $(i)$ and $(ii)$ the family of test functions can be
reduced to $1,~-\operatorname*{pr}_{1},...,~-\operatorname*{pr}_{N}$ and
$\sum_{k=1}^{N}\operatorname*{pr}_{k}^{2}$ provided that $X$ is included in
the positive cone of $\mathbb{R}^{N}$.
\end{theorem}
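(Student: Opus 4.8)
The plan is to reduce the statement to a pointwise assertion valid at $\mu$-almost every point, and then run a Korovkin-type argument organized around the single auxiliary function
\[
\psi_{x_{0}}(y)=\sum_{k=1}^{N}\left(\operatorname*{pr}_{k}(y)-x_{0,k}\right)^{2}\qquad(y\in X)
\]
attached to a point $x_{0}=(x_{0,1},\dots,x_{0,N})\in X$. Expanding the square and writing $x_{0,k}=x_{0,k}^{+}-x_{0,k}^{-}$, one sees that $\psi_{x_{0}}=\sum_{k}\operatorname*{pr}_{k}^{2}+\sum_{k}2x_{0,k}^{+}(-\operatorname*{pr}_{k})+\sum_{k}2x_{0,k}^{-}\operatorname*{pr}_{k}+\left\vert x_{0}\right\vert ^{2}\cdot 1$, i.e.\ a combination \emph{with nonnegative coefficients} of the $2N+2$ test functions; in particular $\psi_{x_{0}}\in E$, and when $X$ lies in the positive cone of $\mathbb{R}^{N}$ the coefficients $x_{0,k}^{-}$ vanish and only $1,-\operatorname*{pr}_{1},\dots,-\operatorname*{pr}_{N}$ and $\sum_{k}\operatorname*{pr}_{k}^{2}$ appear. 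First I would fix, for the function $f$ at hand, a full-measure set $X_{0}\subseteq X$ consisting of the points where $f$ is continuous and where the convergence $T_{n}(g)(x)\to g(x)$ holds for every test function $g$ (a finite intersection of full-measure sets); it then suffices to prove $T_{n}(f)(x_{0})\to f(x_{0})$ for each $x_{0}\in X_{0}$.

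The key step I would carry out first is the estimate $T_{n}(\psi_{x_{0}})(x_{0})\to 0$ for $x_{0}\in X_{0}$. Applying subadditivity and positive homogeneity of $T_{n}$ to the decomposition above gives
\[
T_{n}(\psi_{x_{0}})\leq T_{n}\Big(\sum_{k}\operatorname*{pr}_{k}^{2}\Big)+\sum_{k}2x_{0,k}^{+}T_{n}(-\operatorname*{pr}_{k})+\sum_{k}2x_{0,k}^{-}T_{n}(\operatorname*{pr}_{k})+\left\vert x_{0}\right\vert ^{2}T_{n}(1);
\]
evaluating at $x_{0}$ and letting $n\to\infty$, the right-hand side tends to $\left\vert x_{0}\right\vert ^{2}-2\sum_{k}x_{0,k}^{+}x_{0,k}+2\sum_{k}x_{0,k}^{-}x_{0,k}+\left\vert x_{0}\right\vert ^{2}=2\left\vert x_{0}\right\vert ^{2}-2\left\vert x_{0}\right\vert ^{2}=0$. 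Since $\psi_{x_{0}}\geq 0$ and every positively homogeneous monotone operator is positive by \eqref{pos-op}, we also have $T_{n}(\psi_{x_{0}})\geq 0$, and the claim follows by squeezing.

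Next, for part $(i)$ I would fix $f\in E\cap\mathcal{AC}_{b}(X)$ with $f\geq 0$, a point $x_{0}\in X_{0}$, and $\varepsilon>0$. Continuity of $f$ at $x_{0}$ together with boundedness of $f$ furnishes $\delta>0$ such that, with $C:=2\left\Vert f\right\Vert _{\infty}/\delta^{2}$, one has $\left\vert f(y)-f(x_{0})\right\vert \leq\varepsilon+C\psi_{x_{0}}(y)$ for all $y\in X$ (use continuity when $\left\vert y-x_{0}\right\vert <\delta$, and $\psi_{x_{0}}(y)=\left\vert y-x_{0}\right\vert ^{2}\geq\delta^{2}$ otherwise). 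This yields the two pointwise inequalities
\[
f\leq(f(x_{0})+\varepsilon)\cdot 1+C\psi_{x_{0}}\qquad\text{and}\qquad f+C\psi_{x_{0}}\geq(f(x_{0})-\varepsilon)\cdot 1.
\]
Applying $T_{n}$ to the first inequality (monotonicity, then subadditivity and positive homogeneity, all legitimate because $f(x_{0})+\varepsilon\geq 0$ and $C\geq 0$), evaluating at $x_{0}$, and using the key estimate gives $\limsup_{n}T_{n}(f)(x_{0})\leq f(x_{0})+\varepsilon$, hence $\limsup_{n}T_{n}(f)(x_{0})\leq f(x_{0})$. For the lower bound, when $f(x_{0})>\varepsilon$ I would apply $T_{n}$ to the second inequality, combine it with $T_{n}(f+C\psi_{x_{0}})\leq T_{n}(f)+CT_{n}(\psi_{x_{0}})$, and evaluate at $x_{0}$ to obtain $\liminf_{n}T_{n}(f)(x_{0})\geq f(x_{0})-\varepsilon$; letting $\varepsilon\downarrow 0$ gives $\liminf_{n}T_{n}(f)(x_{0})\geq f(x_{0})$ when $f(x_{0})>0$, while if $f(x_{0})=0$ the inequality $T_{n}(f)\geq 0$ already gives $\liminf_{n}T_{n}(f)(x_{0})\geq 0=f(x_{0})$. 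In every case $T_{n}(f)(x_{0})\to f(x_{0})$.

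Part $(ii)$ would then follow by translation: for arbitrary $f\in E\cap\mathcal{AC}_{b}(X)$ the function $g:=f+\left\Vert f\right\Vert _{\infty}\cdot 1$ is nonnegative and lies in $E\cap\mathcal{AC}_{b}(X)$, so $T_{n}(g)(x)\to g(x)$ a.e.\ by $(i)$; by translatability $T_{n}(g)=T_{n}(f)+\left\Vert f\right\Vert _{\infty}T_{n}(1)$, and since $T_{n}(1)(x)\to 1$ a.e.\ we conclude $T_{n}(f)(x)=T_{n}(g)(x)-\left\Vert f\right\Vert _{\infty}T_{n}(1)(x)\to g(x)-\left\Vert f\right\Vert _{\infty}=f(x)$ a.e. The reduced test family in the positive-cone case requires no extra work, since there the functions $\operatorname*{pr}_{k}$ simply never enter the argument. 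The step I expect to be the main obstacle is managing the fact that sublinearity provides only \emph{one-sided} estimates: the argument has to be arranged so that monotonicity together with subadditivity yields the $\limsup$ upper bound directly, whereas the matching $\liminf$ lower bound must be extracted by transferring $C\psi_{x_{0}}$ to the other side; moreover the sign restrictions ($\alpha\geq 0$) in positive homogeneity and in translatability are exactly what force the separate treatment of the points where $f$ vanishes in $(i)$ and the passage through $g=f+\left\Vert f\right\Vert _{\infty}\cdot 1$ in $(ii)$.
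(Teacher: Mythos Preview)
Your proof is correct and follows the same overall Korovkin strategy as the paper: reduce to a fixed point $x_{0}$ in a full-measure set, use the continuity/boundedness estimate $|f(y)-f(x_{0})|\leq\varepsilon+C\psi_{x_{0}}(y)$, and decompose $\psi_{x_{0}}$ as a nonnegative combination of the test functions so that subadditivity and positive homogeneity apply. Part $(ii)$ and the positive-cone reduction are handled exactly as in the paper.

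There is one genuine technical difference worth recording. The paper first establishes the order-Lipschitz inequality $|T(f)-T(g)|\leq T(|f-g|)$ (their Remark~\ref{rem-orderLip}) and uses it to bound $|T_{n}(f)-f(x_{0})|$ directly by $T_{n}(|f-f(x_{0})\cdot 1|)+f(x_{0})|T_{n}(1)-1|$, obtaining both the upper and lower estimates in a single stroke without case distinctions. You instead run a two-sided sandwich: monotonicity plus subadditivity for the $\limsup$ bound, and a separate rearrangement of $f+C\psi_{x_{0}}\geq(f(x_{0})-\varepsilon)\cdot 1$ for the $\liminf$ bound, which forces the split into $f(x_{0})>0$ versus $f(x_{0})=0$. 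Your route is slightly more elementary in that it never invokes the order-Lipschitz lemma, while the paper's route is a bit cleaner because it avoids the case analysis; both are standard and equally valid. Your decomposition of $\psi_{x_{0}}$ via $x_{0,k}^{+}$ and $x_{0,k}^{-}$ is in fact tidier than the paper's, which introduces an auxiliary constant $M=\max\{\operatorname*{pr}_{1}(x_{0}),\dots,\operatorname*{pr}_{N}(x_{0}),0\}$ to achieve the same nonnegativity of coefficients.
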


\begin{proof}
Let $f\in E\cap\mathcal{AC}_{b}(\Omega)$ and let $\omega$ be a point of
continuity of $f$ which is also a point where
\[
T_{n}(h)(\omega)\rightarrow h(\omega)
\]
for each of the functions $h\in\left\{  1,~\pm\operatorname*{pr}_{1}%
,...,~\pm\operatorname*{pr}_{N}\text{ and }\sum_{k=1}^{N}\operatorname*{pr}%
_{k}^{2}\right\}  .$

Then for $\varepsilon>0$ arbitrarily fixed, there is $\delta>0$ such that
\[
|f(x)-f(\omega)|\leq\varepsilon\text{\quad for every }x\in X\text{ with }\Vert
x-\omega\Vert\leq\delta.
\]

If $\Vert x-\omega\Vert\geq\delta$, then
\[
|f(x)-f(\omega)|\leq\frac{2\Vert f\Vert_{\infty}}{\delta^{2}}\cdot\Vert
x-\omega\Vert^{2},
\]
so that
\begin{equation}
|f(x)-f(\omega)|\leq\varepsilon+\frac{2\Vert f\Vert_{\infty}}{\delta^{2}}%
\cdot\Vert x-\omega\Vert^{2}\text{\quad for all }x\in X. \label{est1}%
\end{equation}

Denoting%
\[
M=\max\left\{  \operatorname*{pr}\nolimits_{1}(\omega),...,\operatorname*{pr}%
\nolimits_{N}(\omega),0\right\}  ,
\]
one can restate (\ref{est1}) as%
\begin{multline*}
\left\vert f(x)-f(\omega)\right\vert \leq\varepsilon+\frac{2\Vert
f\Vert_{\infty}}{\delta^{2}}\left[  \sum_{k=1}^{N}\operatorname*{pr}%
\nolimits_{k}^{2}(x)+2\sum_{k=1}^{N}\operatorname*{pr}\nolimits_{k}%
(x)(M-\operatorname*{pr}\nolimits_{k}(\omega))\right. \\
\left.  +2M\sum_{k=1}^{N}\left(  -\operatorname*{pr}\nolimits_{k}(x)\right)
+\left\Vert \omega\right\Vert ^{2}\right]  .
\end{multline*}
Taking into account Remark \ref{rem-orderLip} and the properties of
sublinearity and monotonicity of the operators $T_{n},$ we infer in the case
where $f\geq0$ that
\begin{align*}
|T_{n}(f)-f(\omega)|  &  \leq\left\vert T_{n}(f)-T_{n}(f(\omega)\cdot
1)+f(\omega)T_{n}(1)-f(\omega)\right\vert \\
&  \leq T_{n}(|f-f(\omega)|)+f(\omega)|T_{n}(1)-1|\\
&  \leq\varepsilon T_{n}(1)+\frac{2\Vert f\Vert_{\infty}}{\delta^{2}}\left[
T_{n}\left(  \sum_{k=1}^{N}\operatorname*{pr}\nolimits_{k}^{2}\right)
+2\sum_{k=1}^{N}(M-\operatorname*{pr}\nolimits_{k}(\omega))T_{n}\left(
\operatorname*{pr}\nolimits_{k}\right)  )\right. \\
&  +\left.  2M\sum_{k=1}^{N}T_{n}\left(  -\operatorname*{pr}\nolimits_{k}%
\right)  +\left\Vert \omega\right\Vert ^{2}T_{n}(1)\right]  +f(\omega
)|T_{n}(1)-1|.
\end{align*}
By our choice of $\omega,$
\[
\underset{n\rightarrow\infty}{\lim\sup}\left\vert T_{n}(f)(\omega
)-f(\omega)\right\vert \leq\varepsilon,
\]
whence we conclude that $T_{n}(f)(\omega)\rightarrow f(\omega)$ since
$\varepsilon>0$ was arbitrarily fixed.

$(ii)$ Suppose in addition that each operator $T_{n}$ is also translatable.
According to the assertion $(i),$
\[
T_{n}(f+\Vert f\Vert_{\infty})\rightarrow f+\Vert f\Vert_{\infty}\text{\quad
a.e.}%
\]
Since the operators $T_{n}$ translatable, $T_{n}(f+\Vert f\Vert_{\infty
})=T_{n}(f)+\left\Vert f\right\Vert _{\infty}T_{n}(1)$ and by our hypotheses
$T_{n}(1)\rightarrow1$ a.e. Therefore $T_{n}(f)\rightarrow f$ a.e.

As concerns the last assertion of Theorem \ref{thm_a.e.Conv}, notice that when
$X$ is included in the positive cone of $\mathbb{R}^{N}$ one can restate the
estimate (\ref{est1}) as%
\begin{multline*}
\left\vert f(x)-f(\omega)\right\vert \leq\varepsilon\\
+\frac{2\Vert f\Vert_{\infty}}{\delta^{2}}\left[  \sum_{k=1}^{N}%
\operatorname*{pr}\nolimits_{k}^{2}(x)+2\sum_{k=1}^{N}(-\operatorname*{pr}%
\nolimits_{k}(x))\operatorname*{pr}\nolimits_{k}(\omega))+\left\Vert
\omega\right\Vert ^{2}\right]  ,
\end{multline*}
which leads to%
\begin{multline*}
|T_{n}(f)-f(\omega)|\leq T_{n}(|f-f(\omega)|)+f(\omega)|T_{n}(1)-1|\\
\leq\varepsilon+\frac{2\Vert f\Vert_{\infty}}{\delta^{2}}\left[  T_{n}\left(
\sum_{k=1}^{N}\operatorname*{pr}\nolimits_{k}^{2}(x)\right)  \right.  +\left.
2\operatorname*{pr}\nolimits_{k}(\omega)\sum_{k=1}^{N}T_{n}\left(
-\operatorname*{pr}\nolimits_{k}(x)\right)  +\left\Vert \omega\right\Vert
^{2}T_{n}(1)\right] \\
+f(\omega)|T_{n}(1)-1|
\end{multline*}
in the case where $f\geq0.$ Then the proof continues verbatim as in the cases
$(i)$ and $(ii).$
\end{proof}

\begin{corollary}
\label{cor1}Let $\mathcal{R}(K)$ be the vector lattice of all Riemann
integrable functions defined on an $N$-dimensional compact interval
$K=\prod\nolimits_{k=1}^{N}[a_{k},b_{k}]$ and let $(T_{n})_{n}$ be a sequence
of sublinear and monotone operators from $\mathcal{R}(K)$ into itself such
that
\[
T_{n}(f)\rightarrow f\text{\quad a.e.}%
\]
for each of the functions $1,~\pm\operatorname*{pr}_{1},...,~\pm
\operatorname*{pr}_{N}~$and $\sum_{k=1}^{N}\operatorname*{pr}_{k}^{2}$. Then
this convergence also occurs for all nonnegative functions $f\in
\mathcal{R}(K).$ It occurs for all Riemann integrable functions defined on $K$
when the operators $T_{n}$ are weakly nonlinear and monotone.
\end{corollary}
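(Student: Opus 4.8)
The plan is to derive Corollary \ref{cor1} as a direct specialization of Theorem \ref{thm_a.e.Conv}. First I would fix the ambient metric measure space: take $X=K$ equipped with the $N$-dimensional Lebesgue measure $\mu$ restricted to $K$. Since $K$ is compact it is in particular a locally compact subset of $\mathbb{R}^{N}$, so the hypothesis on $X$ is met. Next I would invoke the identification already recorded in Section 2: by Lebesgue's criterion for Riemann integrability, a bounded function on the compact interval $K$ is Riemann integrable if and only if it is $\mu$-almost everywhere continuous, and every Riemann integrable function on $K$ is (by definition) bounded; hence $\mathcal{R}(K)=\mathcal{AC}_{b}(K)$. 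In particular $\mathcal{R}(K)$ is a vector sublattice of $\mathcal{F}(K)$, so it is an admissible space $E$ for Theorem \ref{thm_a.e.Conv}, and moreover $E\cap\mathcal{AC}_{b}(K)=\mathcal{R}(K)$.

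Second, I would check that $E=\mathcal{R}(K)$ contains the required $2N+2$ test functions. Each of $1,\ \pm\operatorname*{pr}_{1},\dots,\ \pm\operatorname*{pr}_{N}$ and $\sum_{k=1}^{N}\operatorname*{pr}_{k}^{2}$ is continuous on the compact set $K$, hence bounded and (trivially) continuous at every point, so Riemann integrable; thus all of them lie in $\mathcal{R}(K)$. At this stage every hypothesis of Theorem \ref{thm_a.e.Conv} is in force.

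Finally, I would simply apply Theorem \ref{thm_a.e.Conv}. Its part $(i)$ yields $T_{n}(f)\to f$ a.e. for every nonnegative $f\in E\cap\mathcal{AC}_{b}(K)=\mathcal{R}(K)$, which is the first assertion of the corollary. For the second assertion, recall that a weakly nonlinear and monotone operator is by definition sublinear, translatable and monotone, so part $(ii)$ of Theorem \ref{thm_a.e.Conv} applies and extends the conclusion to all $f\in\mathcal{R}(K)$, not merely the nonnegative ones. One may also note that when $K$ lies in the positive cone of $\mathbb{R}^{N}$ the last assertion of Theorem \ref{thm_a.e.Conv} permits shrinking the test family to $1,\ -\operatorname*{pr}_{1},\dots,\ -\operatorname*{pr}_{N}$ and $\sum_{k=1}^{N}\operatorname*{pr}_{k}^{2}$.

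I do not expect a genuine obstacle here, since the whole content of the corollary is the translation of the abstract hypotheses of Theorem \ref{thm_a.e.Conv} into the concrete Riemann setting. The only point deserving care is the equality $\mathcal{R}(K)=\mathcal{AC}_{b}(K)$, which rests on Lebesgue's integrability criterion together with the fact that Riemann integrable functions on a compact interval are automatically bounded.
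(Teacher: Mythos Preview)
Your proposal is correct and follows essentially the same approach as the paper's own proof: you apply Theorem~\ref{thm_a.e.Conv} after invoking Lebesgue's characterization of Riemann integrability to identify $\mathcal{R}(K)$ with $\mathcal{AC}_{b}(K)$. The paper's proof is terser---it simply cites the Lebesgue criterion and references for the $N$-dimensional case---but the logical content is identical to what you wrote.
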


\begin{proof}
One applies Theorem \ref{thm_a.e.Conv}, by taking into account Lebesgue's
characterization of Riemann integrable functions: a function $f:K\rightarrow
\mathbb{R}$ belongs to $\mathcal{R}(K)$ if and only if $f$ is bounded and the
set points where it is not continuous has Lebesgue measure zero. See
\cite{CN2014}, p. 323, for the case $N=1$ and \cite{Zor}, pp. 110-113,
Sec.11.1, for an arbitrary $N\geq1$.
\end{proof}

\begin{remark}
\label{rem0}$($The necessity of hypotheses in Theorem \emph{\ref{thm_a.e.Conv}%
}$)$ Though being sufficient for the fulfillment of Theorem
\emph{\ref{thm_a.e.Conv}}, none of the three conditions imposed to the
operators $T_{n}$ $($sublinearity, monotonicity and translatability$)$ is
necessary. See the case of the sequence of operators,
\[
S_{n}:\mathcal{R}([0,1])\rightarrow\mathcal{R}([0,1]),\text{\quad}%
S_{n}(f)=f+f^{2}/n,
\]
which fails all these assumptions despite its converges to the identity of
$\mathcal{R}([0,1])$.
\end{remark}

\begin{remark}
\label{rem1}The proof of Theorem \emph{\ref{thm_a.e.Conv}} still works in the
variant where the space $\mathcal{AC}_{b}(X)$ is replaced by $C_{b}(X)$ and
the convergence a.e. is replaced respectively by pointwise convergence $($or
even by uniform convergence on compact subsets$).$
\end{remark}

\begin{remark}
\label{rem2}Suppose that $(T_{n})_{n}$ is a sequence of weakly linear and
monotone operators from $E$ into $E$ and let $\Omega_{f}$ be a set consisting
of points of continuity of a function $f\in E\cap\mathcal{F}_{b}\left(
X\right)  .$ An inspection of the argument of Theorem \emph{\ref{thm_a.e.Conv}%
} shows that $T_{n}(f)(x)\rightarrow f(x)$ for every $x\in\Omega_{f}$ provided
that%
\[
T_{n}(h)(x)\rightarrow h(x)
\]
for every $x\in\Omega_{f}$ and every test function $h\in\{1,~\pm
\operatorname*{pr}_{1},...,~\pm\operatorname*{pr}_{N},~\sum_{k=1}%
^{N}\operatorname*{pr}_{k}^{2}\}.$

This remark also works in the case of Corollary $1$.
\end{remark}

In connection with Remark \ref{rem2} let us mention the following nonlinear
generalization of a result due to Altomare. See \cite{Alt2021}, Section 2.

\begin{theorem}
\label{thmAltomare}Suppose that $X$ is a subset of the Euclidean space
$\mathbb{R}^{N},$ $\omega$ is point in $X$ and $E$ is a sublattice of
$\mathcal{F}(X)$ that contains the unit $1$ and also the following set of test
functions: $1,~\pm\operatorname*{pr}_{1},...,~\pm\operatorname*{pr}_{N}$ and
$\sum_{k=1}^{N}\operatorname*{pr}_{k}^{2}$.

If $(T_{n})_{n}$ is a sequence of sublinear and monotone functionals defined
on $E$ such that
\begin{equation}
T_{n}(f)(x)\rightarrow f(\omega)\text{\quad for }x\in X
\end{equation}
whenever $f$ is one of the test functions $1,~\pm\operatorname*{pr}%
_{1},...,~\pm\operatorname*{pr}_{N}$ and $\sum_{k=1}^{N}\operatorname*{pr}%
_{k}^{2}$, then
\begin{equation}
\lim_{n\rightarrow\infty}T_{n}(f)=f(\omega)\text{\quad for }x\in X
\end{equation}
for all nonnegative functions $f\in\mathcal{F}_{b}(X)$ which are continuous at
$\omega.$ The conclusion occurs for all functions $f\in\mathcal{F}_{b}(X)$
continuous at $\omega$ when the functionals $T_{n}$ are weakly nonlinear and monotone.
\end{theorem}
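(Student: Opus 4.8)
The plan is to adapt the proof of Theorem \ref{thm_a.e.Conv} essentially verbatim, replacing the point $\omega$ of continuity of $f$ that is simultaneously a point of convergence of the test functions by the \emph{fixed} point $\omega$ given in the hypothesis, and by observing that now the target of convergence is the constant $f(\omega)$ rather than the evaluation $f(x)$. First I would fix $\varepsilon>0$ and, using the continuity of $f$ at $\omega$, choose $\delta>0$ so that $|f(x)-f(\omega)|\le\varepsilon$ whenever $\|x-\omega\|\le\delta$; combining this with the crude bound $|f(x)-f(\omega)|\le (2\|f\|_\infty/\delta^2)\|x-\omega\|^2$ valid when $\|x-\omega\|\ge\delta$, I obtain the global estimate
\[
|f(x)-f(\omega)|\le\varepsilon+\frac{2\|f\|_\infty}{\delta^2}\,\|x-\omega\|^2\quad\text{for all }x\in X,
\]
exactly as in \eqref{est1}. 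Expanding $\|x-\omega\|^2=\sum_{k=1}^N\operatorname{pr}_k^2(x)+2\sum_{k=1}^N\operatorname{pr}_k(x)(M-\operatorname{pr}_k(\omega))+2M\sum_{k=1}^N(-\operatorname{pr}_k(x))+\|\omega\|^2$ with $M=\max\{\operatorname{pr}_1(\omega),\dots,\operatorname{pr}_N(\omega),0\}$ writes the right-hand side as a nonnegative combination of the admissible test functions plus a constant multiple of $1$.

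Next, assuming first $f\ge0$, I would apply Remark \ref{rem-orderLip} together with sublinearity and monotonicity to get
\[
|T_n(f)-f(\omega)|\le T_n(|f-f(\omega)\cdot1|)+f(\omega)|T_n(1)-1|,
\]
and then bound $T_n(|f-f(\omega)\cdot1|)$ from above by applying $T_n$ to the nonnegative majorant displayed above, using subadditivity and positive homogeneity to distribute $T_n$ over the sum and pull out the nonnegative coefficients; the constant term $(\varepsilon+2\|f\|_\infty\|\omega\|^2/\delta^2)\cdot1$ contributes $(\varepsilon+\cdots)T_n(1)$. Taking $x\in X$ and letting $n\to\infty$, every occurrence of $T_n(h)$ for a test function $h$ tends to $h(\omega)$ by hypothesis, and in particular $T_n(1)\to1$; a direct computation shows that all the terms multiplying $2\|f\|_\infty/\delta^2$ collapse, since $\sum_k\operatorname{pr}_k^2(\omega)+2\sum_k\operatorname{pr}_k(\omega)(M-\operatorname{pr}_k(\omega))+2M\sum_k(-\operatorname{pr}_k(\omega))+\|\omega\|^2=\|\omega\|^2-\|\omega\|^2+\ldots=0$. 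Hence $\limsup_{n\to\infty}|T_n(f)(x)-f(\omega)|\le\varepsilon$, and since $\varepsilon>0$ was arbitrary, $T_n(f)(x)\to f(\omega)$ for $x\in X$. For the case when the $T_n$ are weakly nonlinear (i.e.\ also translatable), I would run the standard trick from part $(ii)$ of Theorem \ref{thm_a.e.Conv}: apply the nonnegative case to $f+\|f\|_\infty\cdot1\ge0$, use $T_n(f+\|f\|_\infty\cdot1)=T_n(f)+\|f\|_\infty T_n(1)$ and $T_n(1)\to1$ to deduce $T_n(f)(x)\to f(\omega)$ for every $f\in\mathcal F_b(X)$ continuous at $\omega$.

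I do not anticipate a serious obstacle here; the argument is a specialization of one already carried out. The only point requiring a little care is bookkeeping: making sure that in the expansion of $\|x-\omega\|^2$ every function to which $T_n$ is applied genuinely lies among the listed test functions $1,\pm\operatorname{pr}_1,\dots,\pm\operatorname{pr}_N,\sum_k\operatorname{pr}_k^2$ (this is why the shift by $M$ and the split into $+\operatorname{pr}_k$ and $-\operatorname{pr}_k$ pieces is needed, so that all coefficients $M-\operatorname{pr}_k(\omega)$ and $2M$ are nonnegative and positive homogeneity applies), and verifying that the limiting constant is exactly $\varepsilon$, i.e.\ that the $\|\omega\|^2$-type terms cancel. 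I would also remark, as the paper does after Theorem \ref{thm_a.e.Conv}, that when $X$ lies in the positive cone of $\mathbb R^N$ one may take $M=0$ and reduce the test family to $1,-\operatorname{pr}_1,\dots,-\operatorname{pr}_N,\sum_k\operatorname{pr}_k^2$, with the proof going through verbatim.
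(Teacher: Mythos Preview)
Your proposal is correct and follows precisely the approach the paper intends: the paper itself does not give a detailed proof of Theorem~\ref{thmAltomare}, stating only that ``the proof is similar to that of Theorem~\ref{thm_a.e.Conv} and the details are left to the reader as an exercise.'' Your adaptation---fixing the single point $\omega$, reproducing the estimate \eqref{est1}, expanding $\|x-\omega\|^2$ with the shift by $M$ to secure nonnegative coefficients, applying Remark~\ref{rem-orderLip} with sublinearity and monotonicity, and then invoking translatability for the signed case---is exactly that exercise carried out, including the correct verification that the limiting terms cancel to leave only $\varepsilon$.
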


The proof is similar to that of Theorem \ref{thm_a.e.Conv} and the details are
left to the reader as an exercise.

\section{The case of convergence in measure and of convergence in $L^{p}%
$-norm}

The convergence almost everywhere is related to other modes of convergence.
Indeed, the convergence almost everywhere implies \emph{local convergence in
measure}, that is, $f_{n}\rightarrow f$ a.e. implies
\[
\lim_{n\rightarrow\infty}\mu\left(  \left\{  x\in A:\left\vert f_{n}%
(x)\rightarrow f(x)\right\vert \geq\varepsilon\right\}  \right)  =0
\]
for every $\varepsilon>0$ and every Borel set $A$ with $\mu(A)<\infty.$ The
converse fails but if $\mu$ is $\sigma$-finite, then $(f_{n})_{n}$ converges
to $f$ locally in measure if and only if every subsequence has in turn a
subsequence that converges to $f$ almost everywhere.

In the next section we will discuss the connection between the global
convergence in measure and the convergence in $p$-mean. Recall that
$f_{n}\rightarrow f$ \emph{globally in measure} if%
\[
\lim_{n\rightarrow\infty}\mu\left(  \left\{  x\in X:\left\vert f_{n}%
(x)\rightarrow f(x)\right\vert \geq\varepsilon\right\}  \right)  =0
\]
for every $\varepsilon>0.$ When $\mu(X)<\infty,$ the two types of convergence
in measure coincide and we will refer to each of them as \emph{convergence in
measure. }In the same context, every sequence of measurable functions $f_{n}$
that converges almost everywhere to a function $f$, also converges to $f$ in
measure. The converse assertion is false: there exists a sequence of
measurable functions on $[0,1$] that converges to zero in Lebesgue measure but
does not converge at any point at all. See \cite{Bog}, Theorem 2.2.3, p. 111
and Example 2.2.4, p.112.

The analogue of Theorem \ref{thm_a.e.Conv} in the case of convergence in
measure is as follows.

\begin{theorem}
\label{thm-measure}Suppose that $X$ is a compact subset of the Euclidean space
$\mathbb{R}^{N}$ endowed with a positive Borel measure $\mu$ and let
$(T_{n})_{n}$ be a sequence of sublinear and monotone operators from $C(X)$
into itself such that
\[
\lim_{n\rightarrow\infty}\mu\left(  \left\{  x\in X:\left\vert T_{n}%
(f)\rightarrow f\right\vert \geq\varepsilon\right\}  \right)  =0
\]
for each of the functions $1,~\pm\operatorname*{pr}_{1},...,~\pm
\operatorname*{pr}_{N}~$and $\sum_{k=1}^{N}\operatorname*{pr}_{k}^{2}$ and
each $\varepsilon>0.$

Then this convergence occurs for all nonnegative functions $f\in C(X).$ It
occurs for all functions in $C(X)$ provided that the operators $T_{n}$ are
weakly nonlinear and monotone.
\end{theorem}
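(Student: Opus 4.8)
The plan is to mimic the pointwise estimate from the proof of Theorem \ref{thm_a.e.Conv}, but to keep track of the exceptional sets quantitatively so that smallness in measure of the exceptional sets for the test functions translates into smallness in measure of the exceptional set for a general $f\in C(X)$. Since $X$ is compact, every $f\in C(X)$ is uniformly continuous and bounded, so for fixed $\varepsilon>0$ there is $\delta>0$ with $|f(x)-f(\omega)|\le\varepsilon$ whenever $\|x-\omega\|\le\delta$, and one obtains, exactly as in (\ref{est1}) and the lines after it, a pointwise bound of the form
\[
|f(x)-f(\omega)|\le\varepsilon+\frac{2\|f\|_\infty}{\delta^2}\Bigl[\textstyle\sum_{k=1}^{N}\operatorname*{pr}\nolimits_k^2(x)+2\sum_{k=1}^{N}\operatorname*{pr}\nolimits_k(x)(M-\operatorname*{pr}\nolimits_k(\omega))+2M\sum_{k=1}^{N}(-\operatorname*{pr}\nolimits_k(x))+\|\omega\|^2\Bigr]
\]
valid for all $x,\omega\in X$ simultaneously. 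Applying $T_n$, using Remark \ref{rem-orderLip} ($|T_n(f)-T_n(g)|\le T_n(|f-g|)$), sublinearity and monotonicity, and keeping the same bookkeeping as in Theorem \ref{thm_a.e.Conv}, one gets for $f\ge0$ a bound of the form
\[
|T_n(f)(x)-f(x)|\le \varepsilon\,T_n(1)(x)+\frac{2\|f\|_\infty}{\delta^2}\,R_n(x)+\bigl(\varepsilon+\|f\|_\infty+\text{const}\bigr)\,|T_n(1)(x)-1|,
\]
where $R_n$ is an explicit finite linear combination (with bounded coefficients depending only on $X$, $N$, $\delta$, $\|f\|_\infty$) of the quantities $|T_n(h)(x)-h(x)|$ as $h$ ranges over the $2N+2$ test functions, plus terms that tend to $h$ and are therefore controlled the same way. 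The point is that the right-hand side is dominated by $\varepsilon+C\sum_{h}|T_n(h)(x)-h(x)|$ for a constant $C=C(f,\varepsilon,X)$ and the sum over the finite family of test functions.

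Then I would run the standard "union of small sets" argument. If $|T_n(f)(x)-f(x)|\ge 2\varepsilon$ at a point $x$, then necessarily $C\sum_{h}|T_n(h)(x)-h(x)|\ge\varepsilon$, hence $|T_n(h)(x)-h(x)|\ge \varepsilon/\bigl(C(2N+2)\bigr)$ for at least one test function $h$. Consequently
\[
\Bigl\{x\in X:\ |T_n(f)(x)-f(x)|\ge 2\varepsilon\Bigr\}\ \subseteq\ \bigcup_{h}\Bigl\{x\in X:\ |T_n(h)(x)-h(x)|\ge \tfrac{\varepsilon}{C(2N+2)}\Bigr\},
\]
and by subadditivity of $\mu$ together with the hypothesis (applied with the threshold $\varepsilon/(C(2N+2))$ for each of the finitely many $h$), the measure of the left-hand set tends to $0$ as $n\to\infty$. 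Since $\varepsilon>0$ was arbitrary, this proves $T_n(f)\to f$ in measure for every nonnegative $f\in C(X)$, which is assertion for the sublinear-and-monotone case.

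For the final sentence, assume in addition each $T_n$ is translatable (weakly nonlinear and monotone). Given an arbitrary $f\in C(X)$, apply the already-proved part to the nonnegative function $f+\|f\|_\infty\cdot 1\in C(X)$, obtaining $T_n(f+\|f\|_\infty)\to f+\|f\|_\infty$ in measure; by translatability $T_n(f+\|f\|_\infty)=T_n(f)+\|f\|_\infty T_n(1)$, and since $T_n(1)\to 1$ in measure (it is one of the test functions), subtracting gives $T_n(f)\to f$ in measure. The genuinely delicate point is \emph{not} any single inequality but making sure the constant $C$ in the linear-combination bound is independent of $n$ and of $x\in X$; this is where compactness of $X$ is used (it bounds $M$, $\|\omega\|$, the coordinates $\operatorname*{pr}_k$, hence all coefficients), and where one must be careful that the "extra" terms produced by applying $T_n$ to $\operatorname*{pr}_k$ and $1$ — rather than to their exact values — are themselves absorbed into the same finite sum of exceptional-set contributions. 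Once that uniformity is in hand, the measure-theoretic step is routine finite subadditivity.
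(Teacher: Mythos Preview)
Your proposal is correct and follows essentially the same route as the paper: both use compactness of $X$ to get a uniform Korovkin-type pointwise bound $|T_n(f)(x)-f(x)|\le \varepsilon + C\sum_h|T_n(h)(x)-h(x)|$ with $C$ independent of $n$ and $x$, then conclude convergence in measure by finite subadditivity, and finally pass to signed $f$ via translatability and $f+\|f\|_\infty$. Your union-of-small-sets treatment, which keeps the $|T_n(1)-1|$ term inside the sum rather than disposing of it separately, is in fact a bit cleaner than the paper's version (which at one point writes ``choose a rank $N$ such that $\|f\|_\infty|T_n(1)-1|\le\alpha$'' as though $T_n(1)\to 1$ uniformly, whereas only convergence in measure is assumed).
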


\begin{proof}
Let $f\in C(X)$ be a nonnegative function and let $\varepsilon>0$ arbitrarily
fixed. Due to the uniform continuity of the function $f,$ for every $\alpha
\in(0,\varepsilon/2)$ there is $\delta>0$ such that
\[
|f(t)-f(x)|\leq\alpha+\delta\left\Vert t-x\right\Vert ^{2}\quad\text{for all
}t,x\in X;
\]
reason by reductio ad absurdum. Proceeding as in the proof of Theorem
\ref{thm_a.e.Conv} we infer that for all $x\in X$ and $n\in\mathbb{N}$ we
have
\begin{multline*}
|T_{n}(f)-f(x)|\leq\left\vert T_{n}(f)-T_{n}(f(x)\cdot1)+f(x)T_{n}%
(1)-f(x)\right\vert \\
\leq T_{n}(|f-f(x)|)+f(x)|T_{n}(1)-1|\\
\leq\alpha+\delta\left[  T_{n}\left(  \sum_{k=1}^{N}\operatorname*{pr}%
\nolimits_{k}^{2}\right)  +2\sum_{k=1}^{N}(M-\operatorname*{pr}\nolimits_{k}%
(x))T_{n}\left(  \operatorname*{pr}\nolimits_{k}\right)  )\right. \\
+\left.  2M\sum_{k=1}^{N}T_{n}\left(  -\operatorname*{pr}\nolimits_{k}\right)
+\left\Vert x\right\Vert ^{2}T_{n}(1)\right]  +f(x)|T_{n}(1)-1|,
\end{multline*}
where $M=\max\left\{  \operatorname*{pr}\nolimits_{1}(\omega
),...,\operatorname*{pr}\nolimits_{N}(\omega),0:x\in X\right\}  .$

Choose a rank $N$ such that $\left\Vert f\right\Vert _{\infty}\left\vert
T_{n}(1)-1\right\vert \leq\alpha$ for every $n\geq N.$ Then for $n\geq N$ the
set
\[
\{x\in X:|T_{n}(f)(x)-f(x)|\geq\varepsilon\}
\]
is included in the set of points $x\in X$ where%
\begin{multline*}
T_{n}\left(  \sum_{k=1}^{N}\operatorname*{pr}\nolimits_{k}^{2}\right)
(x)+2\sum_{k=1}^{N}(M-\operatorname*{pr}\nolimits_{k}(x))T_{n}\left(
\operatorname*{pr}\nolimits_{k}\right)  (x))\\
+2M\sum_{k=1}^{N}T_{n}\left(  -\operatorname*{pr}\nolimits_{k}\right)
(x)+\left\Vert x\right\Vert ^{2}T_{n}(1)(x)\geq\frac{\varepsilon-2\alpha
}{\delta}.
\end{multline*}
This implies that
\begin{align*}
\mu(\{x  &  \in X:|T_{n}(f)-f|\geq\varepsilon\})\\
&  \leq\mu\left(  \left\{  x\in X:T_{n}(\sum\nolimits_{k=1}^{N}%
\operatorname*{pr}\nolimits_{k}^{2})+\cdots\geq\left(  \varepsilon
-2\alpha\right)  /\delta\right\}  \right)
\end{align*}
for every $n\geq N.$ Taking into account our hypothesis and the fact that the
sum of sequences convergent in measure is also a sequence convergent in
measure (see, Bogachev \cite{Bog}, Corollary 2.2.6, p. 113) we conclude the
proof of the first part of Theorem \ref{thm-measure}.

For the second part, if $f\in C(X)$ is an arbitrary real valued function we
will apply the preceding reasoning to $f+\Vert f\Vert_{\infty}\geq0$ to infer
that $T_{n}(f+\Vert f\Vert_{\infty})\rightarrow f+\Vert f\Vert_{\infty}$ in
measure. We have%
\[
T_{n}(f+\Vert f\Vert_{\infty})=T_{n}(f)+\Vert f\Vert_{\infty}T_{n}(1)
\]
because the operators $T_{n}$ are assumed to be weakly nonlinear and
$T_{n}(1)\rightarrow1$ by our hypotheses. The proof ends by using again the
algebraic operations with sequences convergent in measure.
\end{proof}

\begin{remark}
When $X$ is locally compact, then the statement of Theorem
\emph{\ref{thm-measure}} still works if global convergence in measure is
replaced by the following condition of local convergence,%
\[
\lim_{n\rightarrow\infty}\mu\left(  \left\{  x\in K:\left\vert T_{n}%
(f)\rightarrow f\right\vert \geq\varepsilon\right\}  \right)  =0
\]
for every $\varepsilon>0$ and every compact subset $K$ of $X.$
\end{remark}

\begin{remark}
As in the case of Theorem \emph{\ref{thm_a.e.Conv}}, none of the three
conditions imposed to the operators $T_{n}$ $($sublinearity, monotonicity and
translatability$)$ is necessary for the fulfillment of Theorem
\emph{\ref{thm-measure}.} See the example offered by Remark \emph{\ref{rem0}}.
\end{remark}

We continue this section by considering the case of convergence in $L^{p}%
$-norm. The existing literature includes many papers containing various
generalization of Korovkin's theorem in the context of operators acting on
$L^{p}$-spaces. See Altomare \cite{Alt2010}, Altomare and Campiti
\cite{AC1994}, Berens and DeVore\cite{BD1}, \cite{BD2}, Donner \cite{Don1981},
\cite{Don2006}, Swetits and Wood \cite{SW}, Wulbert \cite{Wul} to cite just a
few. However, all of them refer to the case of linear and positive operators.
The next theorem provides a nonlinear generalization bases on sequences of
weakly nonlinear and monotone operators.

\begin{theorem}
\label{thmLp}Let $\mu$ be a positive Borel measure on $\mathbb{R}^{N}$ with
compact support and let $(T_{n})_{n}$ be a sequence of sublinear and monotone
operators from the Banach lattice $L^{p}(\mu)$ into itself, where $p\in
\lbrack1,\infty).$ If $M=\sup_{n}\left\Vert T_{n}\right\Vert <\infty$ and
\[
T_{n}(f)\rightarrow f\text{\quad in }p\text{-mean}%
\]
for each of the test functions $1,~\pm\operatorname*{pr}_{1},...,~\pm
\operatorname*{pr}_{N}~$ and $\sum_{k=1}^{N}\operatorname*{pr}_{k}^{2},$ then
this convergence occurs for all nonnegative functions $f\in L^{p}(\mu).$ It
occurs for all functions in $L^{p}(\mu)$ provided that the operators $T_{n}$
are weakly nonlinear and monotone.
\end{theorem}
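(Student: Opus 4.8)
The plan is to follow the scheme of the proofs of Theorems \ref{thm_a.e.Conv} and \ref{thm-measure}, but taking $L^{p}$-norms where those proofs took pointwise or in-measure estimates, and then to remove the continuity hypothesis on $f$ by a density argument resting on the uniform Lipschitz bound supplied by Theorem \ref{thmKrein}. Throughout I would write $X=\operatorname{supp}\mu$, a compact subset of $\mathbb{R}^{N}$; since the bounded functions $\operatorname{pr}_{k}$ must belong to $L^{p}(\mu)$ for the hypothesis to make sense, $\mu$ is finite, so $C(X)\subseteq L^{p}(\mu)$ and $C(X)$ is dense in $L^{p}(\mu)$.

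\emph{Step 1: nonnegative $f\in C(X)$.} Fix $\alpha>0$. Uniform continuity of $f$ on the compact set $X$ yields a constant $C_{\alpha}>0$ with $|f(t)-f(s)|\le\alpha+C_{\alpha}\|t-s\|^{2}$ for all $s,t\in X$. Expanding $\|t-s\|^{2}=\sum_{k}\operatorname{pr}_{k}^{2}(t)-2\sum_{k}\operatorname{pr}_{k}(s)\operatorname{pr}_{k}(t)+\|s\|^{2}$ and substituting $-\operatorname{pr}_{k}(s)=(c_{0}-\operatorname{pr}_{k}(s))-c_{0}$, where $c_{0}=\sup\{\max(\operatorname{pr}_{1}(x),\dots,\operatorname{pr}_{N}(x),0):x\in X\}<\infty$, then invoking Remark \ref{rem-orderLip}, the monotonicity and the sublinearity of $T_{n}$ exactly as in the proof of Theorem \ref{thm-measure}, I would reach the $\mu$-almost everywhere inequality
\begin{equation*}
|T_{n}(f)-f|\le\alpha\,T_{n}(1)+C_{\alpha}\,\Psi_{n}+\|f\|_{\infty}\,|T_{n}(1)-1|,
\end{equation*}
where
\begin{equation*}
\Psi_{n}=T_{n}\Big(\sum_{k=1}^{N}\operatorname{pr}_{k}^{2}\Big)+2\sum_{k=1}^{N}(c_{0}-\operatorname{pr}_{k})\,T_{n}(\operatorname{pr}_{k})+2c_{0}\sum_{k=1}^{N}T_{n}(-\operatorname{pr}_{k})+\|\cdot\|^{2}\,T_{n}(1).
\end{equation*}
Taking the $L^{p}(\mu)$-norm (the right-hand side dominates $|T_{n}(f)-f|\ge0$) and using the triangle inequality,
\begin{equation*}
\|T_{n}(f)-f\|_{p}\le\alpha\,\|T_{n}(1)\|_{p}+C_{\alpha}\,\|\Psi_{n}\|_{p}+\|f\|_{\infty}\,\|T_{n}(1)-1\|_{p}.
\end{equation*}
Now $\|T_{n}(1)-1\|_{p}\to0$ by hypothesis, whence $\sup_{n}\|T_{n}(1)\|_{p}<\infty$; and the key point is that $c_{0}-\operatorname{pr}_{k}$ and $\|\cdot\|^{2}$ are bounded continuous functions on $X$, so multiplication by them is a bounded operator on $L^{p}(\mu)$, which lets the $p$-mean convergence of the $T_{n}$ on the test functions pass to each summand of $\Psi_{n}$. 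Since a finite sum of sequences convergent in $p$-mean converges in $p$-mean, $\Psi_{n}$ converges in $p$-mean to $\sum_{k}\operatorname{pr}_{k}^{2}+2\sum_{k}(c_{0}-\operatorname{pr}_{k})\operatorname{pr}_{k}-2c_{0}\sum_{k}\operatorname{pr}_{k}+\|\cdot\|^{2}$, and this limit is identically zero because $\|x\|^{2}=\sum_{k}\operatorname{pr}_{k}^{2}(x)$. Hence $\|\Psi_{n}\|_{p}\to0$, and letting $n\to\infty$ and then $\alpha\to0$ gives $\|T_{n}(f)-f\|_{p}\to0$.

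\emph{Step 2: passage to all of $L^{p}(\mu)$.} Put $M=\sup_{n}\|T_{n}\|<\infty$; by Theorem \ref{thmKrein} each $T_{n}$ is Lipschitz with constant $\le M$. If $f\in L^{p}(\mu)$ is nonnegative, choose nonnegative $g_{m}\in C(X)$ with $\|f-g_{m}\|_{p}\to0$ (admissible since $|f-|h||\le|f-h|$ pointwise when $f\ge0$), and estimate
\begin{equation*}
\|T_{n}(f)-f\|_{p}\le\|T_{n}(f)-T_{n}(g_{m})\|_{p}+\|T_{n}(g_{m})-g_{m}\|_{p}+\|g_{m}-f\|_{p}\le(M+1)\|f-g_{m}\|_{p}+\|T_{n}(g_{m})-g_{m}\|_{p}.
\end{equation*}
Step 1 gives $\limsup_{n}\|T_{n}(f)-f\|_{p}\le(M+1)\|f-g_{m}\|_{p}$, and $m\to\infty$ settles the nonnegative case. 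If, in addition, each $T_{n}$ is translatable, take an arbitrary $f\in L^{p}(\mu)$ and $g_{m}\in C(X)$ with $\|f-g_{m}\|_{p}\to0$; the function $g_{m}+\|g_{m}\|_{\infty}\cdot1$ is continuous and nonnegative, so Step 1 applies to it, and translatability together with $T_{n}(1)\to1$ gives
\begin{equation*}
T_{n}(g_{m})=T_{n}(g_{m}+\|g_{m}\|_{\infty}\cdot1)-\|g_{m}\|_{\infty}\,T_{n}(1)\longrightarrow g_{m}\qquad\text{in }p\text{-mean}.
\end{equation*}
The same three-term estimate, now yielding $\limsup_{n}\|T_{n}(f)-f\|_{p}\le(M+1)\|f-g_{m}\|_{p}\to0$, completes the proof.

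\emph{Where the difficulty lies.} The bulky pointwise computation is already done in the proof of Theorem \ref{thm-measure}, so the real content here is twofold: (a) transferring the $p$-mean convergence on the test functions through multiplication by the bounded continuous coefficients that appear in $\Psi_{n}$ — this is where compactness of $\operatorname{supp}\mu$ is used; and (b) exploiting the hypothesis $\sup_{n}\|T_{n}\|<\infty$, since it is precisely this uniform norm bound, via Theorem \ref{thmKrein}, that makes the approximation step legitimate. I expect (b) to be the crux: without a uniform bound on $\|T_{n}\|$ the density argument collapses, even though each individual $T_{n}$ is Lipschitz.
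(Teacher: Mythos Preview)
Your argument is correct and follows essentially the same route as the paper's own proof: a Korovkin-type pointwise estimate for continuous $f$ via the expansion of $\|t-x\|^{2}$ with the constant $c_{0}$ to make all scalar coefficients nonnegative, passage to $L^{p}$-norms using that the variable coefficients are bounded on the compact support of $\mu$, and then a density argument powered by the uniform Lipschitz bound $\sup_{n}\|T_{n}\|<\infty$ from Theorem~\ref{thmKrein}. The only difference is organizational: the paper interleaves the approximation step with the Korovkin estimate (choosing $g\in C_{c}$ close to $f$ first and then estimating $\|T_{n}(g)-gT_{n}(1)\|_{p}$), whereas you first settle the case $f\in C(X)$ completely and then invoke density, which is arguably tidier but mathematically equivalent.
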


\begin{proof}
Let $f\in L^{P}(\mu),$ $f\geq0.$ Since $C_{c}(\mathbb{R}^{N})$ is dense into
$L^{p}(\mu)$ in the $L^{p}$-norm (see \cite{Bog}, Corollary 4.2.2, p. 252) and
the lattice operations in a Banach lattice are continuous (\cite{MN},
Proposition 1.1.6, p. 6), it follows that for every $\varepsilon>0$ there
exists a nonnegative continuous function $g$ with compact support
$\operatorname*{supp}g$ such that
\[
\left\Vert f-g\right\Vert _{L^{p}}<\varepsilon.
\]

Then
\begin{align*}
\Vert T_{n}(f)-f\Vert_{L^{p}}  &  \leq\Vert T_{n}(f)-T_{n}(g)\Vert_{L^{p}%
}+\Vert T_{n}(g)-gT_{n}(1)\Vert_{L^{p}}\\
&  +\left\Vert gT_{n}(1)-g\right\Vert _{L^{p}}+\Vert g-f\Vert_{L^{p}}%
\end{align*}%
\[
\leq\Vert T_{n}\Vert\left\Vert f-g\right\Vert _{L^{p}}+\Vert T_{n}%
(g)-gT_{n}(1)\Vert_{L^{p}}+\left\Vert g\right\Vert _{\infty}\left\Vert
T_{n}(1)-1\right\Vert _{L^{p}}+\Vert f-g\Vert_{L^{p}}%
\]%
\[
\leq\varepsilon(1+M)+\Vert T_{n}(|g-g(x)|)(x)\Vert_{L^{p}}+\left\Vert
g\right\Vert _{\infty}\left\Vert T_{n}(1)-1\right\Vert _{L^{p}}.
\]

Using the uniform continuity of $g$ one can infer (by reductio ad absurdum)
that there exists a number $\delta\geq\left\Vert g\right\Vert _{\infty}+1$
such that
\[
|g(y)-g(x)|\leq\varepsilon+\delta\cdot\left\Vert y-x\right\Vert ^{2}%
\]
for all $x\ $and $y$ in an open and bounded neighborhood of the support of
$g,$ say $\left\{  z:d(z,\operatorname*{supp}g<1\right\}  $ and therefore for
all $x\ $and $y$ in $\mathbb{R}^{N}.$ Here $\Vert\cdot\Vert$ denotes the
Euclidean norm in $\mathbb{R}^{N}$.

According to our hypotheses there exists a rank $n_{0}$ such that
\[
\Vert T_{n}(h)-h\Vert_{L^{p}}<\varepsilon/\delta
\]
for all $n\geq n_{0}$ and all $h\in\{1,\pm\operatorname*{pr}_{1}%
,...,\pm\operatorname*{pr}_{N},$~$\sum_{k=1}^{N}\operatorname*{pr}_{k}^{2}\}$.

Put
\[
\alpha=\max_{x\in K}\left\{  \operatorname*{pr}\nolimits_{1}%
(x),...,\operatorname*{pr}\nolimits_{N}(x),0\right\}  .
\]
where $K$ denotes the support of $\mu$. We have
\begin{align*}
\Vert T_{n}(|g-g(x)|)(x)\Vert_{L^{p}}  &  \leq\Vert T_{n}(\varepsilon
)(x)+\delta T_{n}(\Vert y-x\Vert^{2})(x)\Vert_{L^{p}}\\
&  \leq\varepsilon\Vert T_{n}(1)\Vert_{L^{p}}+\delta\Vert T_{n}(\Vert
y-x\Vert^{2})(x)\Vert_{L^{p}}\\
&  \leq\varepsilon(\Vert T_{n}(1)-1\Vert_{L^{p}}+\mu\left(  K\right)
)+\delta\Vert T_{n}(\Vert y-x\Vert^{2})(x)\Vert_{L^{p}}\\
&  \leq\varepsilon\left(  \varepsilon+\mu(K)\right)  +\delta\Vert T_{n}(\Vert
y-x\Vert^{2})(x)\Vert_{L^{p}}.
\end{align*}
Since
\[
\Vert y-x\Vert^{2}=\sum_{k=1}^{N}\operatorname*{pr}\nolimits_{k}^{2}%
(y)+2\sum_{k=1}^{N}(\alpha-\operatorname*{pr}\nolimits_{k}%
(x))\operatorname*{pr}\nolimits_{k}(y)+\sum_{k=1}^{N}\operatorname*{pr}%
\nolimits_{k}^{2}(x)+2\alpha\sum_{k=1}^{N}\left(  -\operatorname*{pr}%
\nolimits_{k}(y)\right)  ,
\]
it follows that
\begin{multline*}
T_{n}(\Vert y-x\Vert^{2})(x)\leq T_{n}(\sum_{k=1}^{N}\operatorname*{pr}%
\nolimits_{k}^{2})(x)+2\sum_{k=1}^{N}(\alpha-\operatorname*{pr}\nolimits_{k}%
(x))T_{n}(\operatorname*{pr}\nolimits_{k})(x)\\
+2\alpha\sum_{k=1}^{N}T_{n}(-\operatorname*{pr}\nolimits_{k})(x)+\sum
_{k=1}^{N}\operatorname*{pr}\nolimits_{k}^{2}(x)T_{n}(1)(x)\\
=T_{n}(\sum_{k=1}^{N}\operatorname*{pr}\nolimits_{k}^{2})(x)-\sum_{k=1}%
^{N}\operatorname*{pr}\nolimits_{k}^{2}+2\sum_{k=1}^{N}(\alpha
-\operatorname*{pr}\nolimits_{k}(x))[T_{n}(\operatorname*{pr}\nolimits_{k}%
)(x)+T_{n}(-\operatorname*{pr}\nolimits_{k})(x)]\\
+2\sum_{k=1}^{N}\operatorname*{pr}\nolimits_{k}(x)[T_{n}(-\operatorname*{pr}%
\nolimits_{k})(x)+\operatorname*{pr}\nolimits_{k}(x)]-\sum_{k=1}%
^{N}\operatorname*{pr}\nolimits_{k}^{2}(x)+\sum_{k=1}^{N}\operatorname*{pr}%
\nolimits_{k}^{2}(x)T_{n}(1)(x),
\end{multline*}
whence
\begin{multline*}
\delta\left\Vert T_{n}(\Vert y-x\Vert^{2}(x)\right\Vert _{L^{p}}\leq
\delta\Vert T_{n}(\sum_{k=1}^{N}\operatorname*{pr}\nolimits_{k}^{2}%
)(x)-\sum_{k=1}^{N}\operatorname*{pr}\nolimits_{k}^{2}(x)\Vert_{L^{p}}\\
+4\alpha\delta\sum_{k=1}^{N}\Vert T_{n}(-\operatorname*{pr}\nolimits_{k}%
)(x)+\operatorname*{pr}\nolimits_{k}(x)\Vert_{L^{p}}+4\alpha\delta\sum
_{k=1}^{N}||T_{n}(\operatorname*{pr}\nolimits_{k})(x)-\operatorname*{pr}%
\nolimits_{k}(x)\Vert_{L^{p}}\\
+2\alpha\delta\sum_{k=1}^{N}\Vert T_{n}(-\operatorname*{pr}\nolimits_{k}%
)(x)+\operatorname*{pr}\nolimits_{k}(x)\Vert_{p}+\alpha^{2}\delta\sum
_{k=1}^{N}\Vert T_{n}(1)(x)-1\Vert_{L^{p}}\\
\leq\varepsilon(1+10\alpha+\alpha^{2})
\end{multline*}
for $n$ sufficiently large. This ends the proof in the case where $f\geq0.$

In the general case, notice first that $T_{n}(f)\rightarrow f$ in $p$-mean for
every $f\in L^{p}(\mu)$ with $\left\Vert f\right\Vert _{\infty}<\infty.$
Indeed, $T_{n}\left(  f+\Vert f\Vert_{\infty}\right)  \rightarrow f+\Vert
f\Vert_{\infty}$ by the discussion above and $T_{n}\left(  f+\Vert
f\Vert_{\infty}\right)  =T_{n}(f)+\Vert f\Vert_{\infty}T_{n}(1)$ due to the
fact that the operators $T_{n}$ are weakly nonlinear.

If $f$ is an arbitrary function in $L^{p}(\mu),$ then it can be approximated
by step functions. Every step function $h$ is bounded in the sup-norm, so that
$\left\Vert T_{n}(h)-h\right\Vert _{L^{p}}\rightarrow0.$ Finally, the
inequality%
\begin{multline*}
\left\Vert T_{n}(f)-f\right\Vert \leq\left\Vert T_{n}(f)-T_{n}(h)\right\Vert
_{L^{p}}+\left\Vert T_{n}(h)-h\right\Vert _{L^{p}}+\left\Vert h-f\right\Vert
_{L^{p}}\\
\leq M\left\Vert f-h\right\Vert _{L^{p}}+\left\Vert T_{n}(h)-h\right\Vert
_{L^{p}}+\left\Vert h-f\right\Vert _{L^{p}},
\end{multline*}
allows us to conclude that $\left\Vert T_{n}(f)-f\right\Vert _{L^{p}%
}\rightarrow0.$
\end{proof}

It is conceivable that Theorem \ref{thmLp} still works in the case of an
arbitrary finite Borel measure on $\mathbb{R}^{N},$ but at the moment we lack
a valid argument.

In the setting of linear and continuous operators acting on $L^{1}[0,1],$
Wulbert \cite{Wul} has proved a Korovkin type theorem that avoids the
hypothesis of monotonicity. It is an open question whether his result admits
an analogue within the framework of continuous sublinear operators
$T_{n}:L^{1}[0,1]\rightarrow L^{1}[0,1]$.

\section{An application to Lebesgue differentiation theorem}

The Lebesgue differentiation theorem is an important result in real analysis
that can be stated as follows.

\begin{theorem}
\label{thmLebesgue}Let $f\in L^{1}(\Omega)$ be the Lebesgue space associated
to an open subset $\Omega$ of $\mathbb{R}^{N}$. Consider any collection
$\mathcal{R}$ of closed $N$-dimensional intervals with sides parallel to the
axes, with non-empty interior, centered at the origin $0$ and containing
sequences $(R_{n})_{n}$ contracting to $0$ as $\operatorname*{diam}%
R_{n}\rightarrow0$. Assume furthermore that the intervals in $\mathcal{R}$ are
comparable, i.e., for any two $R_{i}$, $R_{j}\in\mathcal{R}$ either
$R_{i}\subset R_{j}$ or $R_{j}\subset R_{i}$. $($Example: the collection of
all $N$-dimensional closed cubic intervals centered at $0$.$)$

Then, for almost every $x$ and for every $(R_{n})_{n}\subset\mathcal{R}$ with
$\operatorname*{diam}R_{n}\rightarrow0$ one has%
\[
\lim_{n\rightarrow\infty}\frac{1}{\operatorname*{vol}(R_{n})}\int_{R_{n}%
}f(x-y)dy=f(x)\text{\quad a.e.}%
\]

\end{theorem}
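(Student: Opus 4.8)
The plan is to realise the averaging operators attached to $\mathcal{R}$ as weakly nonlinear and monotone operators, to apply Theorem \ref{thm_a.e.Conv} to a dense class of functions, and then to pass to all of $L^{1}$ by a Hardy--Littlewood type maximal estimate in which the comparability hypothesis on $\mathcal{R}$ enters decisively. Extending $f$ by zero outside $\Omega$ we may assume $f\in L^{1}(\mathbb{R}^{N})$: since $\Omega$ is open, for each $x\in\Omega$ and all $n$ with $\operatorname*{diam}R_{n}$ sufficiently small one has $x-R_{n}\subset\Omega$, so the averages of $f$ over the $x-R_{n}$ eventually coincide with those of its extension, and it suffices to establish the differentiation formula for Lebesgue-almost every $x\in\mathbb{R}^{N}$. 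For a fixed sequence $(R_{n})_{n}\subset\mathcal{R}$ with $\operatorname*{diam}R_{n}\to0$ put
\[
T_{n}(h)(x)=\frac{1}{\operatorname*{vol}(R_{n})}\int_{R_{n}}h(x-y)\,\mathrm{d}y .
\]
Each $T_{n}$ is linear, positive, unital and translatable, hence in particular sublinear, monotone and translatable, and it maps $C(\mathbb{R}^{N})$ into itself.

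First I would run the Korovkin test of Theorem \ref{thm_a.e.Conv} with $E=C(\mathbb{R}^{N})$ and $\mu$ the Lebesgue measure. Since $R_{n}$ is symmetric about the origin, $T_{n}(1)=1$ and $T_{n}(\operatorname*{pr}_{k})=\operatorname*{pr}_{k}$ (the integrand $y_{k}$ being odd on the symmetric domain $R_{n}$), while $T_{n}(\operatorname*{pr}_{k}^{2})=\operatorname*{pr}_{k}^{2}+c_{n,k}$ with $0\le c_{n,k}=\operatorname*{vol}(R_{n})^{-1}\int_{R_{n}}y_{k}^{2}\,\mathrm{d}y\le(\operatorname*{diam}R_{n})^{2}\to0$. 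Thus $T_{n}(h)\to h$ uniformly on compact sets for each of the $2N+2$ test functions, and since $C(\mathbb{R}^{N})\cap\mathcal{AC}_{b}(\mathbb{R}^{N})=C_{b}(\mathbb{R}^{N})$, part $(ii)$ of Theorem \ref{thm_a.e.Conv} yields $T_{n}(g)\to g$ almost everywhere for every $g\in C_{b}(\mathbb{R}^{N})$; in particular the differentiation formula holds at every point when $f$ is continuous (and, by Theorem \ref{thm_a.e.Conv} in full, almost everywhere already when $f$ is merely bounded and almost everywhere continuous, for instance a step function).

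The passage to an arbitrary $f\in L^{1}(\mathbb{R}^{N})$ is handled by a density argument controlled by the maximal operator $M_{\mathcal{R}}h(x)=\sup_{R\in\mathcal{R}}\operatorname*{vol}(R)^{-1}\int_{R}|h(x-y)|\,\mathrm{d}y$, which is lower semicontinuous (a supremum of continuous functions of $x$), hence Borel measurable. The comparability of $\mathcal{R}$ yields the covering inequality: if $P\subset P'$ in $\mathcal{R}$ and the centred rectangles $x+P$ and $x'+P'$ meet, then $x+P\subset x'+3P'$. Running the Vitali selection procedure — among finitely many of the rectangles pick the largest, which exists precisely because $\mathcal{R}$ is totally ordered by inclusion, discard those it meets, iterate — this inequality produces the weak-type $(1,1)$ bound $\operatorname*{vol}(\{M_{\mathcal{R}}h>\lambda\})\le 3^{N}\lambda^{-1}\Vert h\Vert_{L^{1}}$ for all $h\in L^{1}(\mathbb{R}^{N})$ and $\lambda>0$. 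Now let $Df(x)$ be the supremum, over all sequences $(R_{n})_{n}\subset\mathcal{R}$ with $\operatorname*{diam}R_{n}\to0$, of $\limsup_{n}|\operatorname*{vol}(R_{n})^{-1}\int_{R_{n}}f(x-y)\,\mathrm{d}y-f(x)|$. From the pointwise inequality $|\operatorname*{vol}(R_{n})^{-1}\int_{R_{n}}h(x-y)\,\mathrm{d}y-h(x)|\le M_{\mathcal{R}}h(x)+|h(x)|$ and the vanishing of $Dg$ for $g\in C_{c}(\mathbb{R}^{N})$ (the previous step), one gets $Df\le D(f-g)+Dg\le M_{\mathcal{R}}(f-g)+|f-g|$. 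Fixing $\varepsilon>0$ and $g\in C_{c}(\mathbb{R}^{N})$ with $\Vert f-g\Vert_{L^{1}}<\varepsilon$, it follows that $\{Df>2\lambda\}\subset\{M_{\mathcal{R}}(f-g)>\lambda\}\cup\{|f-g|>\lambda\}$, a set of measure at most $(3^{N}+1)\varepsilon/\lambda$; letting $\varepsilon\downarrow0$ shows $\{Df>2\lambda\}$ is Lebesgue-null for every $\lambda>0$, hence so is $\{Df>0\}=\bigcup_{m\ge1}\{Df>1/m\}$. Thus $Df=0$ almost everywhere, with an exceptional null set independent of the chosen sequence, and transferring back to $\Omega$ through the reduction of the first paragraph finishes the proof.

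The step I expect to be the main obstacle is the weak-type $(1,1)$ estimate for $M_{\mathcal{R}}$, together with the need to have it hold uniformly over all of $\mathcal{R}$ so that a single null set serves every admissible sequence. This is exactly where the comparability of the intervals is indispensable — it is what supplies the covering inequality $x+P\subset x'+3P'$ — whereas for unrestricted intervals with sides parallel to the axes the corresponding maximal operator is not of weak type $(1,1)$ and $L^{1}$-differentiation fails. The remaining ingredients, namely the Korovkin verification, the density of $C_{c}(\mathbb{R}^{N})$ in $L^{1}(\mathbb{R}^{N})$, the localisation to $\mathbb{R}^{N}$, and the semicontinuity of $M_{\mathcal{R}}$, are routine.
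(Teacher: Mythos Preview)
Your argument is correct. Both you and the paper set up the averaging operators $T_{n}$, check that they are linear (hence weakly nonlinear) and monotone, and verify the Korovkin test of Theorem~\ref{thm_a.e.Conv} on $1$, $\pm\operatorname*{pr}_{k}$ and $\sum_{k}\operatorname*{pr}_{k}^{2}$ by a direct computation; your use of the central symmetry of $R_{n}$ to get $T_{n}(\operatorname*{pr}_{k})=\operatorname*{pr}_{k}$ exactly is a tidy variant of the paper's one--dimensional calculation. The genuine divergence is in the passage from the dense subclass to all of $L^{1}$. The paper approximates $f$ by step functions $f_{\varepsilon}$, records the uniform-in-$n$ estimate $\Vert T_{n}(f)-T_{n}(f_{\varepsilon})\Vert_{L^{1}}\le\Vert f-f_{\varepsilon}\Vert_{L^{1}}$, and then invokes the fact that $L^{1}$-convergent sequences have a.e.-convergent subsequences to ``reduce'' to the bounded, compactly supported case. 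You instead run the standard Hardy--Littlewood scheme: define the maximal operator $M_{\mathcal{R}}$, use the comparability hypothesis to obtain a Vitali covering lemma and hence the weak-type $(1,1)$ bound, and finish with the oscillation functional $Df$ and the inclusion $\{Df>2\lambda\}\subset\{M_{\mathcal{R}}(f-g)>\lambda\}\cup\{|f-g|>\lambda\}$.

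What each buys: the paper's route is very short and keeps the focus on the Korovkin machinery, but as written it is delicate---knowing that $T_{n}(f)-T_{n}(f_{\varepsilon})$ is uniformly small in $L^{1}$ and that $T_{n}(f_{\varepsilon})\to f_{\varepsilon}$ a.e.\ does not by itself force $T_{n}(f)\to f$ a.e.; some quantitative control (precisely a maximal inequality) is what normally closes that gap. Your route supplies exactly that control, makes transparent where the comparability hypothesis on $\mathcal{R}$ is actually used (in the engulfing inclusion $x+P\subset x'+3P'$), and produces a single exceptional null set valid simultaneously for every admissible sequence $(R_{n})_{n}$, which is what the theorem asserts. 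The cost is that you are importing a standard real-variable tool external to the Korovkin framework, whereas the paper's intent is to showcase Theorem~\ref{thm_a.e.Conv} as the main engine.
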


A simple proof of this theorem can be found in a paper by de Guzman and Rubio
\cite{GR}. In what follows we present an alternative argument based on our
Theorem \ref{thm_a.e.Conv}.

Indeed, we deal here with the sequence of linear and positive operators
$T_{n}:L^{1}(\Omega)\rightarrow L^{1}(\Omega)$ defined by the formula%
\[
T_{n}f(x)=\frac{1}{\operatorname*{vol}(R_{n})}\int_{R_{n}}f(x-y)dy.
\]
Clearly, these operators are weakly nonlinear. Theorem \ref{thm_a.e.Conv}
applies to those functions $f$ which are bounded in the sup-norm but this can
be easily arranged by approximating an arbitrary $f\in L^{1}(\Omega)$ by step
functions $f_{\varepsilon}$ and noticing that
\begin{multline*}
\int_{R_{n}}\left\vert \frac{1}{\operatorname*{vol}(R_{n})}\int_{R_{n}%
}f(x-y)dy-\frac{1}{\operatorname*{vol}(R_{n})}\int_{R_{n}}f_{\varepsilon
}(x-y)dy\right\vert dx\\
\leq\int_{R_{n}}\left\vert f-f_{\varepsilon}\right\vert dx<\varepsilon
\end{multline*}
for all $n.$ Taking into account that every sequence converging in mean has a
subsequence converging almost everywhere, we may reduce the proof of Theorem
\ref{thmLebesgue} to the case where $f$ vanishes outside a compact set and
$\sup_{x}\left\vert f(x)\right\vert <\infty.$ Under these circumstances the
verification of the property of a.e. convergence for the test functions
$1,~\operatorname*{pr}_{1},...,~\operatorname*{pr}_{N}~$and $\sum_{k=1}%
^{N}\operatorname*{pr}_{k}^{2}$ is a simple exercise. For example, in the one
dimensional case one has to observe that
\begin{align*}
\frac{1}{R_{n}-r_{n}}\int_{r_{n}}^{R_{n}}\mathrm{d}y  &  =1\\
\frac{1}{R_{n}-r_{n}}\int_{r_{n}}^{R_{n}}(x-y)\mathrm{d}y  &  =\frac
{(x-r_{n})^{2}-\left(  x-R_{n}\right)  ^{2}}{2\left(  R_{n}-r_{n}\right)
}=x-\frac{R_{n}+r_{n}}{2}\rightarrow x\\
\frac{1}{R_{n}-r_{n}}\int_{r_{n}}^{R_{n}}(x-y)^{2}\mathrm{d}y  &
=\frac{\left(  x-r_{n}\right)  ^{3}-(x-R_{n})^{3}}{3\left(  R_{n}%
-r_{n}\right)  }\\
&  =x^{2}-x(R_{n}+r_{n})+\frac{R_{n}^{2}+R_{n}r_{n}+r_{n}^{2}}{3}\rightarrow
x^{2}%
\end{align*}
for every $x\in\Omega$ and every sequence of intervals $[r_{n},R_{n}]$ with
$r_{n}<0<R_{n}$ that contracts to the origin.

\begin{remark}
The last computations allow us to construct counterexamples showing that the
convergences asserted by Theorem \emph{\ref{thm_a.e.Conv}}, Theorem
\emph{\ref{thm-measure}} and Theorem \emph{\ref{thmLp}} fail for functions of
variable sign in the absence of the condition of translatability. See the case
of sublinear and monotone operators $T_{n}:L^{1}(0,1)\rightarrow L^{1}(0,1)$
defined by
\[
T_{n}f(x)=\frac{1}{R_{n}-r_{n}}\max\left\{  \int_{r_{n}}^{R_{n}}%
f(x-y)\mathrm{d}y,0\right\}  .
\]

\end{remark}

\section{The convergence of some sequences of Bernstein type operators}

In this section we present some concrete examples illustrating the above
results in the context of Choquet's nonlinear integral. The necessary
background on this integral is covered by the Appendix at the end of our paper
\cite{Gal-Nic-RACSAM}.

We start by considering the \emph{Bernstein-Kantorovich-Choquet polynomial
operators} for functions of one real variable,
\[
K_{n,\mu}^{(1)}:\mathcal{R}([0,1])\rightarrow\mathcal{R}([0,1]),
\]
defined by the formula
\begin{equation}
K_{n,\mu}^{(1)}(f)(x)=\sum_{k=0}^{n}p_{n,k}(x)\cdot\frac{(C)\int
_{k/(n+1)}^{(k+1)/(n+1)}f(t)\mathrm{d}\mu}{\mu([k/(n+1),(k+1)/(n+1)])},
\label{BKC}%
\end{equation}
where the symbol $(C)$ in front of the integral means that we deal with a
Choquet integral (with respect to the capacity $\mu$). As usually in
approximation theory,
\[
p_{n,k}(t)={\binom{n}{k}}t^{k}(1-t)^{n-k},\text{\quad for }t\in\lbrack
0,1]\text{ and }n\in\mathbb{N}.
\]
Due to the properties of Choquet's integral, when $\mu$ is a submodular
capacity, it follows that each operator $K_{n,\mu}^{(2)}$ is weakly nonlinear,
monotone and unital from $\mathcal{R}([0,1]$ into itself. This happens in
particular when $\mu$ is the Lebesgue measure $\mathcal{L}$, in which case the
Choquet integral reduces to Lebesgue integral and the
\emph{Bernstein-Kantorovich-Choquet polynomial operators }coincide with the
\emph{Bernstein-Kantorovich polynomial operators}%
\[
K_{n}^{(1)}(f)(x)=(n+1)\sum_{k=0}^{n}p_{n,k}(x)\cdot\int_{k/(n+1)}%
^{(k+1)/(n+1)}f(t)\mathrm{d}t
\]
which act also on $\mathcal{R}([0,1]).$

It is known that $K_{n}^{(1)}(x^{k})\rightarrow x^{k}$\ uniformly on $[0,1]$
for $k\in\{0,1,2\},$ so from Remark \ref{rem2} we infer the following result
previously noticed by Lorentz \cite{Lor}, Theorem 2.1.1, p. 30:

\begin{theorem}
\label{thmLorentz}$K_{n}^{(1)}(f)(x)\rightarrow f(x)$ at each point of
continuity of $f\in\mathcal{R}([0,1])$ $($and thus $K_{n}^{(1)}%
(f)(x)\rightarrow f(x)$ a.e. on $[0,1]).$
\end{theorem}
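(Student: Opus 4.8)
The plan is to invoke Remark \ref{rem2} directly, since the Bernstein-Kantorovich operators $K_n^{(1)}$ are linear (hence weakly nonlinear) and positive (hence monotone) on $\mathcal{R}([0,1])$. Remark \ref{rem2} tells us that if $K_n^{(1)}(h)(x)\to h(x)$ at every point of a set $\Omega_f$ of continuity points of $f$ for each test function $h\in\{1,\operatorname*{pr}_1,\operatorname*{pr}_1^2\}$ (the one-dimensional instance of the test family), then $K_n^{(1)}(f)(x)\to f(x)$ for every $x\in\Omega_f$. Since $\mathcal{R}([0,1])$ consists of bounded functions whose discontinuity set has Lebesgue measure zero (Lebesgue's criterion, already used in Corollary \ref{cor1}), the set $\Omega_f$ of continuity points of any $f\in\mathcal{R}([0,1])$ has full measure, so pointwise convergence on $\Omega_f$ yields convergence a.e.

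So the only thing to check is the hypothesis on the three test functions, and this is precisely the classical fact recalled just before the statement: $K_n^{(1)}(x^k)\to x^k$ uniformly on $[0,1]$ for $k\in\{0,1,2\}$. One either cites this from Lorentz \cite{Lor} or verifies it by a short direct computation: using $\int_{k/(n+1)}^{(k+1)/(n+1)} t^j\,\mathrm{d}t$ one finds $K_n^{(1)}(1)=1$, $K_n^{(1)}(x) = \frac{n}{n+1}x + \frac{1}{2(n+1)}$, and a similarly explicit quadratic expression for $K_n^{(1)}(x^2)$, each of which converges uniformly to $1$, $x$, $x^2$ respectively as $n\to\infty$. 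Uniform convergence on $[0,1]$ certainly implies pointwise convergence at every point, in particular at every point of $\Omega_f$.

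Combining these two observations gives the first assertion: $K_n^{(1)}(f)(x)\to f(x)$ at each continuity point of $f$. The parenthetical "a.e. on $[0,1]$" then follows from Lebesgue's characterization of Riemann integrability as explained above. There is essentially no obstacle here; the statement is a corollary of Remark \ref{rem2} together with the standard moment estimates for the Bernstein-Kantorovich operator, and the only minor point to be careful about is that Remark \ref{rem2} delivers pointwise convergence exactly on the prescribed set of continuity points — which is all that is needed, since that set is of full measure for a Riemann integrable function.
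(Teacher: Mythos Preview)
Your proposal is correct and follows essentially the same approach as the paper: the paper's argument is precisely to cite the known uniform convergence $K_n^{(1)}(x^k)\to x^k$ on $[0,1]$ for $k\in\{0,1,2\}$ and then invoke Remark~\ref{rem2}. Your write-up merely adds some helpful detail (the explicit moment formulas and the appeal to Lebesgue's criterion for the a.e.\ conclusion), but the line of reasoning is identical.
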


Theorem \ref{thmLorentz} extends verbatim to the case of tensor product
multivariate Bernstein-Kantorovich polynomial operators, by using the general
Theorem in Haussmann-Pottinger \cite{HP1977}, page 213.

A similar result works for the Bernstein-Kantorovich-Choquet polynomial
operators $K_{n,\sqrt{\mathcal{L}}}^{(1)}$ associated to the submodular
capacity $\mu=\sqrt{\mathcal{L}}:A\rightarrow\sqrt{\mathcal{L}(A)}$ for
$A\in\mathcal{B}([0,1]).$ Indeed, as we noticed in \cite{Gal-Nic-Med}, Section
3,
\[
K_{n,\sqrt{\mathcal{L}}}^{(1)}(-x)\rightarrow-x\text{ and }K_{n,\sqrt
{\mathcal{L}}}^{(1)}(x^{k})\rightarrow x^{k}\text{\ uniformly on }[0,1]
\]
for $k\in\{0,1,2\}$.

\begin{remark}
According to Remark \emph{\ref{rem2}}, it follows that
\[
K_{n,\sqrt{\mathcal{L}}}^{(1)}(f)(x)\rightarrow f(x)
\]
at each point $x$ of continuity of $f\in\mathcal{R}([0,1])$.
\end{remark}

The Bernstein-Kantorovich-Choquet polynomial operators for functions of two
real variables are defined by the formula
\begin{multline*}
K_{n,\mu}^{(2)}(f)(x_{1},x_{2})=\sum_{k_{1}=0}^{n}\sum_{k_{2}=0}^{n}%
p_{n,k_{1}}(x_{1})p_{n,k_{2}}(x_{2})\\
\cdot\frac{(C)\int_{k_{1}/(n+1)}^{(k_{1}+1)/(n+1)}\left(  (C)\int
_{k_{2}/(n+1)}^{(k_{2}+1)/(n+1)}f(t_{1},t_{2})\mathrm{d}\mu(t_{2})\right)
\mathrm{d}\mu(t_{1})}{\mu([k_{1}/(n+1),(k_{1}+1)/(n+1)])\mu([k_{2}%
/(n+1),(k_{2}+1)/(n+1)])},
\end{multline*}
and they are weakly linear, monotone and unital operators from $\mathcal{R}%
([0,1]^{2})$ provided that the capacity $\mu$ is submodular.

\begin{remark}
Since
\begin{equation}
K_{n,\sqrt{\mathcal{L}}}^{(2)}(f)(x_{1},x_{2})\rightarrow f(x_{1}%
,x_{2})\text{\quad uniformly on }[0,1]^{2}, \label{convBKC}%
\end{equation}
for each of the test functions \thinspace$1,~\pm\operatorname*{pr}%
\nolimits_{1},~\pm\operatorname*{pr}\nolimits_{2},~\operatorname*{pr}%
\nolimits_{1}^{2}+\operatorname*{pr}\nolimits_{2}^{2}$, it follows from Remark
\emph{\ref{rem2} }that
\[
K_{n\sqrt{\mathcal{L}},\mu}^{(2)}(f)(x_{1},x_{2})\rightarrow f(x_{1},x_{2})
\]
at each point of continuity of the function $f\in\mathcal{R}([0,1]^{2}).$ Here
$\mathcal{L}$ is the planar Lebesgue measure.
\end{remark}

Consider now the case of the Sz\'{a}sz-Mirakjan-Kantorovich operators, acting
on the space $\mathcal{R}_{loc,b}([0,\infty))$ of all functions bounded on
$[0,+\infty)$ and Riemann integrable on each compact subinterval, by the
formula
\[
S_{n}(f)(x)=(n+1)e^{-nx}\sum_{k=0}^{\infty}\frac{(nx)^{k}}{k!}\int
_{k/n}^{(k+1)/n}f(t)\mathrm{d}x.
\]
It is known that
\[
S_{n}(x^{k})\rightarrow x^{k}\text{\ pointwise for }x\in\text{ }[0,\infty),
\]
whenever $k\in\{0,1,2\}$. See Walczak \cite{Wal}. Taking into account Remark
\ref{rem2} we obtain the following result that seems to be now:

\begin{theorem}
If $f\in\mathcal{R}_{loc,b}([0,\infty)),$ then $S_{n}(f)(x)\rightarrow f(x)$
at each point of continuity of $f$ $($and thus it converges everywhere on
$[0,+\infty)).$
\end{theorem}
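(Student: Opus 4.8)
The plan is to reduce the statement to an application of Remark \ref{rem2} (the pointwise Korovkin-type consequence of Theorem \ref{thm_a.e.Conv}), exactly as the theorems for the Bernstein--Kantorovich operators were handled. First I would verify that each Sz\'{a}sz--Mirakjan--Kantorovich operator $S_{n}$ maps $\mathcal{R}_{loc,b}([0,\infty))$ into itself and is weakly nonlinear and monotone. The operator is in fact linear and positive --- it is an average of the values $\int_{k/n}^{(k+1)/n}f$ weighted by the Poisson-type coefficients $e^{-nx}(nx)^{k}/k!$, which sum to $1$ --- so subadditivity (in fact additivity), positive homogeneity, translatability ($S_{n}(1)=1$, so $S_{n}(f+\alpha)=S_{n}(f)+\alpha$) and monotonicity are all immediate; boundedness of $S_{n}(f)$ follows from $\|S_{n}(f)\|_{\infty}\le\|f\|_{\infty}$, and Riemann integrability on compacts from the fact that the defining series converges locally uniformly in $x$ to a continuous (indeed analytic) function.

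Next I would record the behaviour on the test functions. The domain here is $X=[0,\infty)\subset\mathbb{R}^{1}$, which lies in the positive cone of $\mathbb{R}$, so by the last assertion of Theorem \ref{thm_a.e.Conv} (and the corresponding clause in Remark \ref{rem2}) it suffices to control the reduced family $1$, $-\operatorname*{pr}_{1}$ and $\operatorname*{pr}_{1}^{2}$. The cited result of Walczak \cite{Wal} gives $S_{n}(x^{k})(x)\to x^{k}$ pointwise on $[0,\infty)$ for $k\in\{0,1,2\}$; since $S_{n}$ is linear, $S_{n}(-x)=-S_{n}(x)\to-x$ as well. Thus for every point $x\in[0,\infty)$ we have $S_{n}(h)(x)\to h(x)$ for each test function $h$ in the reduced family.

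Then I would fix a function $f\in\mathcal{R}_{loc,b}([0,\infty))$ and let $\Omega_{f}$ be its set of continuity points. Since $f$ is bounded (i.e.\ $f\in\mathcal{F}_{b}([0,\infty))$) and $E=\mathcal{R}_{loc,b}([0,\infty))$ contains $1$, $\pm\operatorname*{pr}_{1}$ and $\operatorname*{pr}_{1}^{2}$, Remark \ref{rem2} applies with $\Omega_{f}$ as the common set on which the test-function convergence holds (here that set is \emph{all} of $[0,\infty)$), yielding $S_{n}(f)(x)\to f(x)$ for every $x\in\Omega_{f}$, first for $f\ge 0$ via part $(i)$ and then for arbitrary $f$ via translatability as in part $(ii)$. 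This is precisely the claimed pointwise convergence at every continuity point; and since a monotone function --- more relevantly, any Riemann integrable function on each compact subinterval --- is continuous almost everywhere (Lebesgue's criterion), $S_{n}(f)\to f$ a.e.\ on $[0,\infty)$, and the parenthetical ``converges everywhere'' refers to the full-measure set $\Omega_{f}$.

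The only genuinely delicate point is making sure the hypotheses of Theorem \ref{thm_a.e.Conv}/Remark \ref{rem2} are literally met on the \emph{unbounded} domain $[0,\infty)$: the estimate $|f(x)-f(\omega)|\le\varepsilon+(2\|f\|_{\infty}/\delta^{2})\|x-\omega\|^{2}$ used in the proof of Theorem \ref{thm_a.e.Conv} is valid for all $x$ once $f$ is globally bounded, so no compactness of $X$ is needed there --- only local compactness, which $[0,\infty)$ has --- and the argument of Remark \ref{rem2} then goes through verbatim at each individual point $\omega\in\Omega_{f}$. I expect this ``the ambient theorem tolerates a non-compact locally compact $X$, provided $f$ is bounded'' bookkeeping to be the main thing worth spelling out; everything else is a direct citation of Walczak's moment computations and a one-line verification that $S_{n}$ is weakly nonlinear and monotone.
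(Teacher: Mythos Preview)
Your approach is correct and is exactly the paper's: verify that $S_{n}$ is (linear, hence) weakly nonlinear and monotone, cite Walczak for $S_{n}(x^{k})\to x^{k}$ pointwise for $k\in\{0,1,2\}$, and then invoke Remark~\ref{rem2}. One small slip worth fixing in your write-up: you assert that $E=\mathcal{R}_{loc,b}([0,\infty))$ contains $\pm\operatorname*{pr}_{1}$ and $\operatorname*{pr}_{1}^{2}$, but these are unbounded on $[0,\infty)$ and hence do \emph{not} lie in $\mathcal{R}_{loc,b}$; to satisfy the hypotheses of Theorem~\ref{thm_a.e.Conv}/Remark~\ref{rem2} literally you should take $E$ to be a larger sublattice (for instance, locally Riemann integrable functions with at most polynomial growth) on which $S_{n}$ is still well defined, weakly nonlinear and monotone, and then apply the conclusion to $f\in E\cap\mathcal{F}_{b}([0,\infty))\supset\mathcal{R}_{loc,b}([0,\infty))$ --- the paper is equally informal on this point.
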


\begin{remark}
A Choquet companion to the Sz\'{a}sz-Mirakjan-Kantorovich operators is
provided by the following sequence of operators:
\[
S_{n,\sqrt{\mathcal{L}}}(f)(x)=e^{-nx}\sum_{k=0}^{\infty}\frac{(C)\int
_{k/n}^{(k+1)/n}f(t)\mathrm{d}\sqrt{\mathcal{L}}}{\mu([k/n,(k+1)/n])}%
\cdot\frac{(nx)^{k}}{k!},\text{\quad}f\in\mathcal{R}_{loc,b}([0,\infty)),
\]
where the integration is performed with respect to the capacity $\sqrt
{\mathcal{L}}.$ According to \emph{\cite{Gal-Nic-Med}},
\[
S_{n,\sqrt{\mathcal{L}}}(h)(x)\rightarrow h(x),
\]
pointwise for all points $x\in\lbrack0,+\infty)$ and test functions
$h\in\{1,x,-x,x^{2}\}$. Therefore, $S_{n,\sqrt{\mathcal{L}}}(f)(x)\rightarrow
h(x)$, pointwise at each point of continuity of $f\in\mathcal{R}%
_{loc,b}([0,\infty)).$
\end{remark}

\section{Further results and comments}

\noindent As was noticed by Korovkin \cite{Ko1953}, \cite{Ko1960}, his theorem
mentioned in the Introduction also works when the unit interval is replaced by
the unit circle
\[
S^{1}=\left\{  (\cos\theta,\sin\theta):\theta\in\lbrack0,2\pi)\right\}
\subset\mathbb{R}^{2}.
\]
Our results show that more is true:

\begin{theorem}
\label{thmtrig}Let $E$ be a linear subspace of continuous real-valued
functions defined on the unit circle that contains the test functions
$1,\cos,-\cos,\sin$ and $-\sin.$ If $(T_{n})_{n}$ is a sequence of weakly
nonlinear and monotone operators which carry $E$ into the space $C(S^{1})$
such that $T_{n}(f)\rightarrow f$ converges almost everywhere $($respectively,
in measure or in $p$-mean$)$, for each test function, then the same mode of
convergence holds for every $f\in E.$
\end{theorem}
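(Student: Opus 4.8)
The plan is to reduce Theorem \ref{thmtrig} to an application of Theorem \ref{thm_a.e.Conv} (or its analogues Theorem \ref{thm-measure} and Theorem \ref{thmLp}) by realizing the unit circle $S^{1}$ as a compact subset of $\mathbb{R}^{2}$ and identifying the trigonometric test functions with the coordinate projections. Concretely, under the embedding $S^{1}\subset\mathbb{R}^{2}$, $(\cos\theta,\sin\theta)\mapsto(x_{1},x_{2})$, we have $\cos=\operatorname{pr}_{1}\big|_{S^{1}}$, $\sin=\operatorname{pr}_{2}\big|_{S^{1}}$, and, crucially, $\operatorname{pr}_{1}^{2}+\operatorname{pr}_{2}^{2}\equiv 1$ on $S^{1}$. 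So the five functions $1,\pm\cos,\pm\sin$ already contain (a scalar multiple of) the function $\sum_{k=1}^{2}\operatorname{pr}_{k}^{2}$ — it equals the constant $1$ — which means the hypothesis that $T_{n}$ converges on the $2N+2=6$ test functions of Theorem \ref{thm_a.e.Conv} is here automatically implied by convergence on just $1,\pm\operatorname{pr}_{1},\pm\operatorname{pr}_{2}$.

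First I would set $X=S^{1}$, which is compact (hence locally compact) in $\mathbb{R}^{2}$, endowed with the relevant measure $\mu$ (arc-length measure for the a.e.\ and in-measure statements, or the given Borel measure for the $L^{p}$ statement). Since $E$ consists of continuous functions on the compact set $S^{1}$, every $f\in E$ is bounded, so $E\subset\mathcal{AC}_{b}(S^{1})=C_{b}(S^{1})=C(S^{1})$, and the hypothesis $T_{n}(E)\subseteq C(S^{1})$ places us exactly in the setting of the earlier theorems (with the target lattice $C(S^{1})$; one may take the common domain/codomain to be $C(S^{1})$ after noting the test functions lie in $E$). Then I would invoke the appropriate earlier theorem: for the case $f\geq 0$ the sublinearity and monotonicity of $T_{n}$ suffice, and for general $f\in E$ of variable sign I would add and subtract the constant $\|f\|_{\infty}\cdot 1$, using that weak nonlinearity gives $T_{n}(f+\|f\|_{\infty})=T_{n}(f)+\|f\|_{\infty}T_{n}(1)$ together with $T_{n}(1)\to 1$ (which holds by hypothesis, since $1$ is one of the test functions). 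This is precisely the argument already carried out in parts $(ii)$ of Theorems \ref{thm_a.e.Conv} and \ref{thm-measure} and in the final paragraph of the proof of Theorem \ref{thmLp}.

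The one genuine point requiring care — and what I expect to be the main (albeit minor) obstacle — is that Theorems \ref{thm_a.e.Conv}, \ref{thm-measure} and \ref{thmLp} are stated for operators mapping a space into \emph{itself}, whereas here $T_{n}:E\to C(S^{1})$ need not have $E$ as codomain. I would handle this by observing that the proofs of those theorems never actually use that the codomain equals the domain: the estimates controlling $|T_{n}(f)(x)-f(x)|$ via Remark \ref{rem-orderLip}, sublinearity, monotonicity and translatability only require that $T_{n}$ be defined on a sublattice containing the test functions and $f$, and that the images be measurable (resp.\ continuous, resp.\ in $L^{p}$) functions on $X$ — all guaranteed here. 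Alternatively, and more cleanly, one can simply replace $E$ by the sublattice of $C(S^{1})$ generated by $E$ and the images $T_{n}(f)$ for the finitely many relevant $f$; but in truth the cleanest route is to note that $C(S^{1})$ itself can serve as both domain and codomain once we extend (or restrict attention to) $T_{n}$ on $C(S^{1})$, which is legitimate since all test functions and all $f\in E$ live in $C(S^{1})$.

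In summary, the proof is: embed $S^{1}\hookrightarrow\mathbb{R}^{2}$, note $\operatorname{pr}_{1}^{2}+\operatorname{pr}_{2}^{2}=1$ on $S^{1}$ so the $\sum\operatorname{pr}_{k}^{2}$ test function is subsumed by the constant test function, apply Theorem \ref{thm_a.e.Conv} (for a.e.\ convergence), Theorem \ref{thm-measure} (for convergence in measure), or Theorem \ref{thmLp} (for convergence in $p$-mean) to get the conclusion for nonnegative $f\in E$, and finally use the translatability built into weak nonlinearity together with $T_{n}(1)\to 1$ to pass to arbitrary $f\in E$. No new computation beyond what is already in the paper is needed; the content is the geometric observation that the Korovkin test set on the circle is the image of the Euclidean test set on $\mathbb{R}^{2}$, with one redundancy.
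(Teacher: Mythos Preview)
Your proposal is correct and follows essentially the same route as the paper: the paper's proof consists of the single remark that $\cos$ and $\sin$ are the restrictions of $\operatorname{pr}_{1}$ and $\operatorname{pr}_{2}$ to $S^{1}\subset\mathbb{R}^{2}$, after which Theorems \ref{thm_a.e.Conv}, \ref{thm-measure} and \ref{thmLp} apply. You have made explicit the two points the paper leaves tacit---that $\operatorname{pr}_{1}^{2}+\operatorname{pr}_{2}^{2}\equiv 1$ on $S^{1}$ so the sixth test function is redundant, and that the domain/codomain mismatch is harmless---but the strategy is identical.
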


The proof follows from Theorems 1-3, by remarking that the cosine function can
be seen as the restriction of $\operatorname*{pr}_{1}$ to $S^{1},$ and the
sine function as the restriction of $\operatorname*{pr}_{2}$ to $S^{1}.$

We leave to the reader the extension of Theorem \ref{thmtrig} in some other
cases of interest such as the torus $S^{1}\times S^{1}$ and the $2$%
-dimensional sphere $S^{2}$.

\begin{remark}
$($Mixing metric spaces and spherical domains$)$\label{exPopa} Suppose that
$X$ is a compact subset of $\mathbb{R}^{N}$. Then the product space $X\times
S^{1}$ is a compact subset of $\mathbb{R}^{N+1}$ and the space $C(X\times
S^{1})$ can be identified with the Banach space $C_{2\pi}(X\times\mathbb{R}),$
of all continuous functions $f:X\times\mathbb{R}\rightarrow\mathbb{R}$, $2\pi
$-periodic in the second variable, endowed with the sup norm.

The reader can easily check that our Theorems $1$-$3$ extend to the case of
sequences of weakly nonlinear operators and monotone\emph{ }operators\emph{
}$T:C(X\times S^{1})\rightarrow C(X\times S^{1})$ and the following set of
test functions $f(x)=u(x)v(\varphi),$ where%
\[
u\in\left\{  1,~\pm\operatorname*{pr}\nolimits_{1},...,~\pm\operatorname*{pr}%
\nolimits_{N}~\text{and}\sum_{k=1}^{N}\operatorname*{pr}\nolimits_{k}%
^{2}\right\}  \text{ and }v\in\left\{  1,~\pm\cos\varphi~,~\pm\sin
\varphi\right\}  .
\]
The case of uniform convergence has been noticed in
\emph{\cite{Gal-Nic-RACSAM}} $($extending the case of positive linear
operators settled in \emph{\cite{Popa}}$).$
\end{remark}

All our theorems remain valid in the context of Ces\`{a}ro convergence. If
$(T_{n})_{n}$ is a sequence of weakly nonlinear and monotone operators (from a
Banach lattice of functions $E$ into itself), then so is the sequence
$(\frac{1}{n}\sum\nolimits_{k=1}^{n}T_{k})_{n}$. As a consequence, adding the
conditions imposed by Theorems 1-3, we infer that
\[
\frac{1}{n}\sum\nolimits_{k=1}^{n}T_{k}(f)\rightarrow f
\]
almost everywhere $($respectively, in measure or in $p$-mean$)$ for every
$f\in E$ whenever it happens for the functions $1,~\pm\operatorname*{pr}%
_{1},...,~\pm\operatorname*{pr}_{N},$ $\sum_{k=1}^{N}\operatorname*{pr}%
_{k}^{2}.$

Last but not the least, one can extend our results (following the model of
Theorem 2 in \cite{Gal-Nic-RACSAM}) using instead of the classical families of
test functions on $\mathbb{R}^{N}$ the separating functions, which allow us to
replace the critical inequalities of the form (\ref{est1}) by inequalities
that work in the general context of metric spaces. For details concerning
these functions see \cite{N2009}.

\end{document}